\documentclass[10pt]{amsart}

\usepackage[final]{microtype}
\usepackage{amsmath,amsthm,amssymb,amsfonts}
\usepackage{mathtools}
\usepackage{tikz}
\usetikzlibrary{arrows.meta, positioning}
\usepackage{enumerate}

\usepackage[T1]{fontenc}
\usepackage[utf8]{inputenc}
\usepackage{fourier}

%\usepackage[margin=2.5cm]{geometry}
%\usepackage[top=2cm,left=2cm,right=2cm,bottom=2cm]{geometry}

%\textwidth=16. true cm
%\textheight=22. true cm
%\voffset=-1. true cm
%\hoffset = -1.75 true cm
\setlength{\parskip}{5pt}

%--------------------------------------------------------------------------------------------------------------
\theoremstyle{plain}
\newtheorem{thm}{Theorem}
\newtheorem{prop}[thm]{Proposition}
\newtheorem{problem}[thm]{Problem}
\newtheorem{question}[thm]{Question}

\newtheorem*{thmA}{Theorem A}
\newtheorem*{propB}{Proposition B}
\newtheorem*{thmB}{Theorem A'}
\newtheorem*{cl}{\textbf{Claim}}

\theoremstyle{definition}
\newtheorem{defn}[thm]{Definition}

\theoremstyle{remark}
\newtheorem{rem}[thm]{\textbf{Remark}}

%--------------------------------------------------------------------------------------------------------------------------------------------
\newcommand{\bsm}{\begin{smallmatrix}}
\newcommand{\esm}{\end{smallmatrix}}

\newcommand{\R}{\mathbb{R}}
\newcommand{\Z}{\mathbb{Z}}
\newcommand{\PP}{\textrm{Per}(\phi^A,M_A)}
\newcommand{\T}{\mathbb{T}}
\newcommand{\D}{\mathbb{D}}
\newcommand{\s}{\mathbb{R}/\mathbb{Z}}

\newcommand{\SL}{\mathrm{SL}}
\newcommand{\GL}{\mathrm{GL}}

\DeclareMathOperator{\tr}{\mathrm{tr}}
\DeclareMathOperator{\per}{\mathrm{per}}
\DeclareMathOperator{\surg}{\mathrm{Surg}}
\DeclareMathOperator{\length}{\mathrm{length}}

\newcommand{\Fs}{\mathcal{F}^\mathrm{s}}
\newcommand{\Fu}{\mathcal{F}^\mathrm{u}}
\newcommand{\Fcs}{\mathcal{F}^\mathrm{cs}}
\newcommand{\Fcu}{\mathcal{F}^\mathrm{cu}}
\newcommand{\Es}{E^\mathrm{s}}
\newcommand{\Ec}{E^\mathrm{c}}
\newcommand{\Eu}{E^\mathrm{u}}

%--------------------------------------------------------------------------------------------------------------------------------------------

\title{Infinitely many closed paths in the graph of Anosov flows}
\author{Mario Shannon}
\date{}

\address{Facultad de Ingenier\'ia\\
Universidad de la Rep\'ublica\\
Montevideo, Uruguay}

%\email{mario@example.edu}

\subjclass[2020]{37D20, 57M50}
\keywords{Anosov flows, Dehn surgery, partially hyperbolic dynamics}

\begin{document}

\begin{abstract}
Given an Anosov flow on a closed 3-manifold, we are interested in the problem of whether or not making non-trivial Fried surgeries along a finite set of periodic orbits can produce a flow equivalent to itself. We show that for some suspension Anosov flows, there exist infinitely many pairs of periodic orbits satisfying this property.
\end{abstract}

\maketitle

%%%%%%%%%%%%%%%%%%%%%%%%%%%%%%%%%%%%%%%%%%%%%%%%%%%%%%%%%%%%%%%%%%%%%%%%%%%%%%%%%%%%%%%%%%%%%%%%%%%%%%%%%%%%%%%%%%%%%%%%%%%%%%%%%%%%%%%%%%%%%%%%%%
\section{Introduction}
Given an \emph{Anosov flow} on a closed 3-manifold and a \emph{periodic orbit}, there is an operation called \emph{Fried surgery} that allows to construct a new Anosov flow on a new 3-manifold, by making a modification of the initial flow around the chosen periodic orbit. This modification is essentially a \emph{Dehn surgery} along the periodic orbit, but adapted to the pairs (Anosov flow, 3-manifold) in such a way that the resulting manifold is also equipped with an Anosov flow. 

As a consequence, this operation allows to define a \emph{graph structure} on the set of (\emph{orbital equivalence classes} of) Anosov flows, where edges connecting two pairs (Anosov flow, 3-manifold) represent Fried surgeries that allow to obtain one flow from the other. 

The purpose of this note is to show that, at some vertices of the graph corresponding to suspension Anosov flows, there exist infinitely many closed paths of length 2. This replies to a statement made in \cite{Bonatti-Iakovoglou_surgeries} by Bonatti and Iakovoglou, where those authors claim that there should exist at most finitely many loops of this kind.

%%%%%%%%%%%%%%%%%%%%%%%%%%%%%%%%%%%%%%%%%%%%%%%%%%%%%%%%%%%%%%%%%%%%%%%%%%%%%%%%%%%%%%%%%%%%%%%%%%%%%%%%%%%%%%%%%%%%%%%%%%%%%%%%%%%%%%%%%%%%%%%%%%
\subsection{Fried Surgery on Anosov flows}  
A non-singular flow $\phi=\{\phi_t\}_{t\in\R}$ on a closed Riemannian 3-manifold $M$ is \emph{Anosov} if it is generated by a smooth vector field $X$ and the action $D\phi_t:TM\to TM$ on the tangent bundle preserves a \emph{uniformly hyperbolic splitting} $TM=\Es\oplus\Ec\oplus\Eu$, where the $\Es$ (resp. $\Eu$) line-bundle is exponentially contracted (resp. expanded) under the $D\phi_t$-action for $t\geq 0$, and $\Ec$ is a line-bundle colinear with $X$.

We refer to \cite{Katok-Hasselblatt} for general properties about dynamics preserving a uniformly hyperbolic splitting. Among many interesting properties, Anosov flows always have infinitely many \emph{periodic orbits}, all of them \emph{saddle type hyperbolic}. The \emph{stable manifold theorem} asserts that the plane bundles $\Es\oplus\Ec$ and $\Ec\oplus\Eu$ are tangent to a pair of transverse codimension-one foliations $\Fcs$ and $\Fcu$ on $M$, that intersect along the $\phi$-orbits. We call them \emph{center-stable} and \emph{center-unstable} foliations. Every leaf of these foliations is homeomorphic to either a cylinder or a M\"obius  band (in case the leaf contains a periodic orbit), or to a plane.   

Let $\gamma$ be a periodic orbit of an Anosov flow $\{\phi_t:M\to M\}_{t\in\R}$ and assume its center-stable/unstable leaves $\Fcs(\gamma)$ and $\Fcu(\gamma)$ are cylinders. In \cite{Fri} Fried introduced a surgery operation, represented in Figure \eqref{fig_Fried_surgery}, that consists in:
\begin{enumerate}
\item \emph{Choose a \textbf{blow-up} $(\phi^*,M^*)$ of the flow $(\phi,M)$ along $\gamma$, whose restriction to the boundary $\{\phi_t^*:\partial M^*\to \partial M^*\}_{t\in\R}$ is Morse-Smale.}

\smallskip
\noindent
A \emph{blow-up} of the pair $(\phi,M)$ along $\gamma$ is a 3-manifold $M^*$ with boundary homeomorphic to a torus, equipped with a flow 
$\{\phi_t^*:M^*\to M^*\}_{t\in\R}$ and a map $\pi:M^*\to M$ such that: (i)- $\pi:\partial M^*\to\gamma$ is a $\mathbb{S}^1$-fibration; (ii)- $\pi:M^*\setminus\partial M^*\to M\setminus\gamma$ is a homeomorphism; and (iii)- $\pi\circ\phi_t^*=\phi_t\circ\pi$, $\forall\ t\in\R$. There always exists a blow-up where the flow on the boundary is a \emph{Morse-Smale flow} with two attracting and two repelling periodic orbits.  

\smallskip
\item \emph{Make a non-trivial \textbf{blow-down} of the boundary.} 

\smallskip
\noindent
For this, choose a foliation $\mathcal{C}$ by simple closed curves on $\partial M^*$ such that each leaf is transverse to the flow $\{\phi_t^*:\partial M^*\to\partial M^*\}_{t\in\R}$ and intersects once each periodic orbit in the boundary. Up to a time reparametrization near the boundary we may assume that $\phi^*$ preserves this foliation. Then, the quotient $\sigma:M^*\to N\coloneqq M^*/\mathcal{C}$, that consists in collapsing each leaf of $\mathcal{C}$ into a point, is a $3$-manifold where $\delta\coloneqq \partial M^*/\mathcal{C}$ is a simple closed curve, and there is an induced flow $\{\psi_t:N\to N\}_{t\in\R}$ that has $\delta$ as a saddle type periodic orbit.   
\end{enumerate}  

\begin{figure}[t]
\label{fig_Fried_surgery}
\centering
\includegraphics[width=0.8\textwidth]{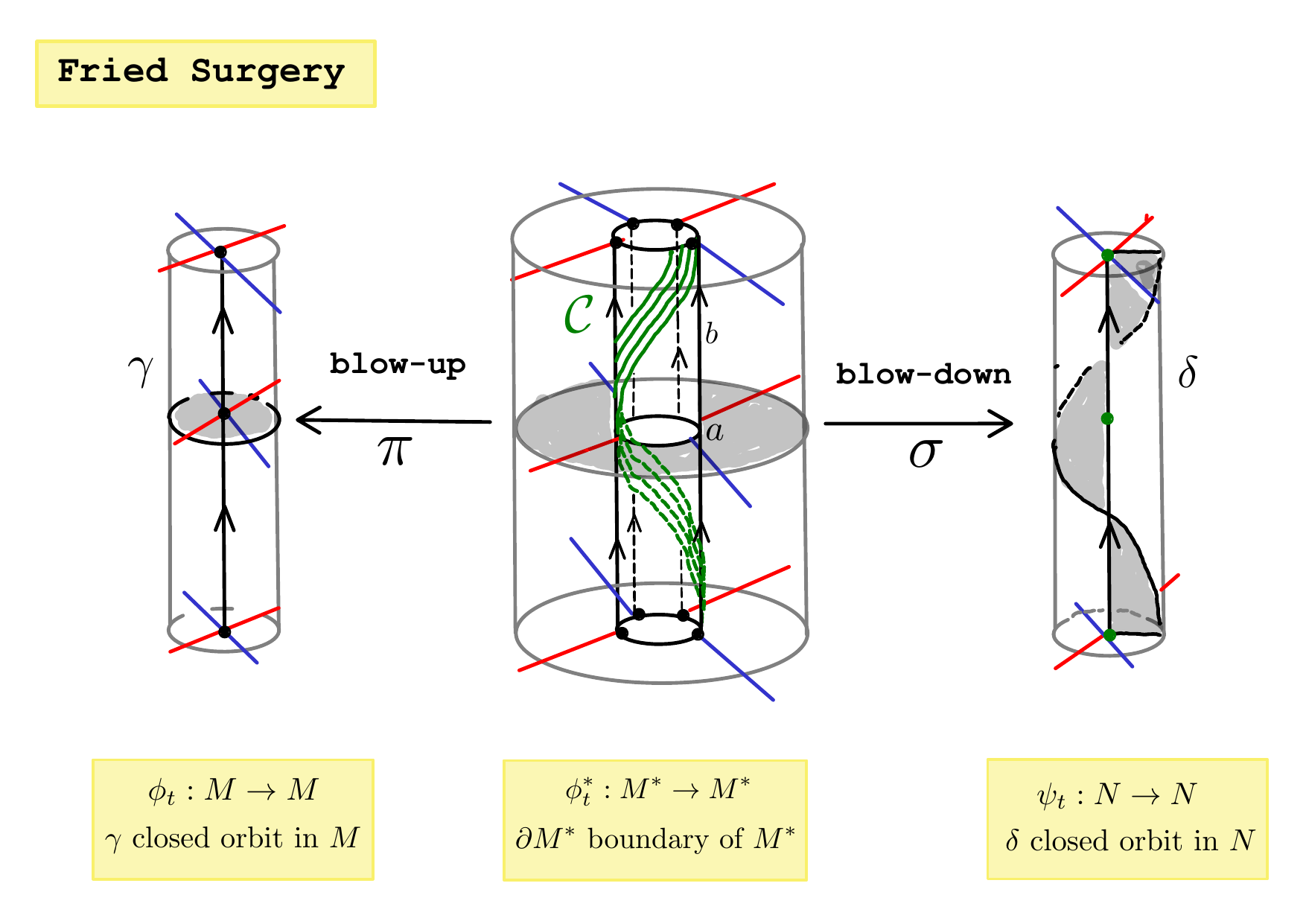}
\caption{Fried surgery $(\phi,M)\xrightarrow[]{(\gamma,1)}(\psi,N)$.}
\end{figure}

Topologically, this operation is an \emph{integral Dehn surgery} on $M$ along the simple closed curve $\gamma$. Thus the homeomorphism type of $N$ is determined by the homology class in $\partial M^*$ of the leaves $C$ of the chosen foliation $\mathcal{C}$. In the basis of $H_1(\partial M^*)$ given by the class $a$ of an oriented $\mathbb{S}^1$-fiber (\emph{meridian}) and the class $b$ of an oriented closed orbit (\emph{longitude}) we have that $[C]=a+m\cdot b$ for some $m\in\Z$. 

This surgery has the additional property that the resulting manifold is equipped with a flow $\{\psi_t:N\to N\}_{t\in\R}$, that is actually equivalent to $(\phi,M\setminus\gamma)$ in the complement of $\delta$. It is possible to check that the orbit $\delta$ is saddle type, and that the flow $\psi$ preserves a pair of globally defined center-stable and center-unstable foliations on $N$, induced from those corresponding to $\{\phi_t:M\to M\}_{t\in\R}$. However, this flow is not an Anosov flow (i.e. does not preserves a hyperbolic splitting) in a canonical way (see \cite{Sh23} for a deeper discussion). It is what is called a \emph{topological Anosov flow}.  

\begin{rem}
It is conjectured that every topological Anosov flow on a closed 3-manifold is orbitally equivalent to a smooth Anosov flow (Bonatti-Wilkinson, \cite{Bonatti-Wilkinson_PH_diffeos}), and in \cite{Sh_top-Anosov=Anosov} this has been proven to be true in the special case where the flow is transitive (i.e. there exists a point whose orbit is dense in the 3-manifold). From now on, we will state everything under this assumption and make no more distinction between Anosov or topological Anosov.
\end{rem}

\begin{defn}
We say that the previous operation is an \emph{integral Fried surgery of slope $m$} on the flow $(\phi,M)$ along the orbit $\gamma$. We use the notations 
\begin{align*}
& (\psi,N)=\surg(\phi,M;(\gamma,m))\\
&(\phi,M) \xrightarrow[]{(\gamma,m)}(\psi,N)
\end{align*}
to indicate that $(\psi,N)$ is obtained from $(\phi,M)$ by doing the specified surgery.
\end{defn}

Observe that, since the foliation by $\phi$-orbits is unchanged in the complement of $\gamma$ under the Fried surgery, every $\phi$-periodic orbit in $M$ produces a unique $\psi$-periodic orbit in $N$ after surgery. We denote by $\Pi:\textrm{Per}(\phi,M)\to\textrm{Per}(\psi,N)$ the natural correspondence that associates periodic orbits before and after surgery.  

\begin{rem}
The previous surgery can be also performed if $\Fcs(\gamma)$, $\Fcu(\gamma)$ are M\"obius bands. However, in the present note we will only work with Anosov flows having orientable center-stable/unstable foliations.
\end{rem}

%%%%%%%%%%%%%%%%%%%%%%%%%%%%%%%%%%%%%%%%%%%%%%%%%%%%%%%%%%%%%%%%%%%%%%%%%%%%%%%%%%%%%%%%%%%%%%%%%%%%%%%%%%%%%%%%%%%%%%%%%%%%%%%%%%%%%%%%%%%%%%%%%%
\subsection{Graph of Anosov flows}
Given an Anosov flow, we can construct many other non-equivalent ones by making Fried surgeries on its periodic orbits. The problem that arises is to understand, given two Anosov flows, in which way one can be obtained by making Fried surgeries on the other. This can be formalized through the study of the topology of the graph described below. 

We say that two flows are \emph{orbitally equivalent} if there exists a homeomorphism between their supporting manifolds that sends the orbit foliation of one flow to the orbit foliation of the other, preserving the orientation of the orbits. 
Using the methods in \cite{Sh_top-Anosov=Anosov}, it is possible to check that the Fried surgery is a well-defined operation within the family of orbital equivalence classes of (topological) Anosov flows on closed 3-manifolds. This allows to define a graph structure as follows:  

\begin{defn}
\label{defn_graph_AF} 
Define the \emph{graph of Anosov flows}\footnote{See \cite{Hoff} for a similar graph construction within the family of closed 3-manifolds.} to be the 1-complex $\mathbf{G}$ having: 
\begin{itemize}
\item \text{Vertices $\mathbf{G}^0$}: Orbital equivalence classes $[(\phi,M)]$ of Anosov flows on 3-manifolds; 
\item \text{Edges} $\mathbf{G}^1$: For each vertex $v$ in $\mathbf{G}^0$ choose a fixed representative $(\phi,M)$ and for each pair $\gamma\in\textrm{Per}(\phi,M)$ and $m\in\Z$ put an oriented edge 
$$[(\phi,M)]=v \xrightarrow[]{(\gamma,m)} w=[(\psi,N)]$$
connecting the class $v$ to the class $w$ of $(\psi,N)=\surg(\phi,M;(\gamma,m))$. 
\end{itemize}
\end{defn} 

Paths in the directed graph $\mathbf{G}$ encode surgery operations within the family of orbital equivalence classes of Anosov flows, and hence the topological complexity of the graph $\mathbf{G}$ reflects somehow the complexity of the surgery operations on Anosov flows. 

\begin{rem}
Other versions of a graph encoding surgery operations between orbital equivalence classes of Anosov flows can be given, notably with smaller number of edges. For instance, we can define another graphs by: only considering edges $(\gamma,\pm 1)$ in $\mathbf{G}^1$ (and removing all the others);
identifying edges $(\gamma_1,m)\sim(\gamma_2,m)$ in $\mathbf{G}^1$ whenever $\gamma_1$ and $\gamma_2$ are equivalent under a self-orbital equivalence of $(\phi,M)$; or by simply adding a non-oriented edge between two classes whenever there exist at least one surgery connecting them. The results presented in this note are adapted for the case of Definition \ref{defn_graph_AF} above.     
\end{rem}

Anosov flows can be transitive or not, their center-stable and center-unstable foliations can be orientable or not, and their supporting 3-manifold can be orientable or not. Since Fried surgeries preserve each of these properties, the graph $\mathbf{G}$ has many connected components. One fundamental open problem is the following:

\begin{problem}[Christy-Fried-Ghys, see \cite{Kirby_problems}- Problem 3.54]
\label{problem_connectivity}
Let $\mathbf{G}_\circ$ be the subgraph of $\mathbf{G}$ whose vertices represent Anosov flows on \emph{orientable closed 3-manifolds}, dynamically \emph{transitive} and having \emph{orientable center-stable and unstable foliations}. Is the graph $\mathbf{G}_\circ$ connected ?
\end{problem}

One way to investigate the topology of $\mathbf{G}_\circ$ (or even $\mathbf{G}$) consists in fixing one Anosov flow, fixing a set of $n>0$ periodic orbits, and looking at all possible surgeries along the chosen curves. This approach is studied by Bonatti and Iakovoglou in \cite{Bonatti-Iakovoglou_surgeries}, for the case where the Anosov flow is a \emph{suspension} (cf. Section \ref{sec_Anosov_sol-mflds}) and $n=2$. 

If $\gamma_1$, $\gamma_2$ are periodic orbits of a transitive Anosov flow $(\phi,M)$ and $m_1$, $m_2$ are integers, for each $i=1,2$ let $(\phi_i,M_i)=\surg(\phi,M;(\gamma_i,m_i))$ be an Anosov flow obtained by the corresponding Fried surgery, and let $\Pi_i:\textrm{Per}(\phi,M)\to\textrm{Per}(\phi_i,M_i)$ be the map that identifies periodic orbits before and after surgery, as defined before. The methods at Section 3 of \cite{Sh_top-Anosov=Anosov} allow to prove that 
$$\surg(\phi_1,M_1;(\Pi_1(\gamma_2),m_2))\ \text{is orbitally equivalent to}\ \surg(\phi_2,M_2;(\Pi_2(\gamma_1),m_1)),$$
and hence $\surg(\phi^A,M_A;(\gamma_1,m_1),(\gamma_2,m_2))$ is a well-defined class $[(\psi,N)]$ of Anosov flow, independent of the order in which the surgeries along $\gamma_1$ and $\gamma_2$ are performed. 

Surgeries on a pair of periodic orbits yield a diagram:  
\begin{center}
\begin{tikzpicture}[
    node distance=1.5cm,
%    every node/.style={rectangle, draw, inner sep=1.5pt},
    >={Stealth}
]

\node (bot) at (0,0) {$\left[(\phi_2,M_2)\right]$};
\node (a) at (-4,1) {$\left[(\phi,M)\right]$};
\node (b) at (4,1) {$\left[(\psi,N)\right]$};
\node (top) at (0,2) {$\left[(\phi_1,M_1)\right]$};

\draw[->] (a) -- node[below] {$(\gamma_2,m_2)$}(bot);
\draw[->] (a) -- node[above] {$(\gamma_1,m_1)$}(top);
\draw[->] (bot) -- node[below] {$(\Pi_2(\gamma_1),m_1)$}(b);
\draw[->] (top) -- node[above] {$(\Pi_1(\gamma_2),m_2)$}(b);
\end{tikzpicture}
\end{center}
We will use the notation 
$$[(\phi,M)]\xrightarrow[]{(\gamma_1,m_1),(\gamma_2,m_2)}[(\psi,N)]$$  
to indicate that $(\psi,N)$ is obtained, up to orbital equivalence, by doing surgeries of slopes $m_1$ and $m_2$ along the periodic orbits $\gamma_1$ and $\gamma_2$ of $(\phi,M)$, respectively. This represents the existence of two paths in $\mathbf{G}_\circ$ connecting the classes $[(\phi,M)]$ and $[(\psi,N)]$, each one having length $2$, as indicated in the diagram above.

The work in \cite{Bonatti-Iakovoglou_surgeries} shows that:

\begin{thm}[Bonatti-Iakovoglou]
Let $(\phi^A,M_A)$ be the suspension flow generated by a hyperbolic matrix $A\in\SL(2,\Z)$ with $\tr(A)\geq 3$. Then, 
\begin{enumerate}
\item For every pair $(\gamma_1,\gamma_2)$ of different periodic orbits, there is at most finitely many pairs $(m_1,m_2)$ of integers such that 
$$\surg(\phi^A,M_A;(\gamma_1,m_1),(\gamma_2,m_2))$$
is equivalent to a suspension Anosov flow (cf. \cite[Proposition 9.1]{Bonatti-Iakovoglou_surgeries}).

\item There exist special pairs $(\gamma_1,\gamma_2)$ of periodic orbits where every surgery 
$$\surg(\phi^A,M_A;(\gamma_1,m_1),(\gamma_2,m_2))$$
with $(m_1,m_2)\neq (0,0)$ produces an Anosov flow that is not equivalent to a suspension (cf. \cite[Theorem 5]{Bonatti-Iakovoglou_surgeries}).
\end{enumerate} 
\end{thm}

\begin{rem}\label{rem_Bonatti-Iako}
Along the discussion at \cite[Section 1.3]{Bonatti-Iakovoglou_surgeries} the authors claim that, for every fixed hyperbolic matrix $A\in\SL(2,\Z)$, it seems plausible the existence of \emph{at most finitely many} 4-tuples $(\gamma_1,m_1,\gamma_2,m_2)$ with $\gamma_1\neq\gamma_2$ and $(m_1,m_2)\neq(0,0)$ such that the surgery $\surg(\phi^A,M_A;(\gamma_1,m_1),(\gamma_2,m_2))$ produces a suspension Anosov flow. 
\end{rem}

In this note we describe a simple construction that, for any given suspension Anosov flow, allows to obtain infinitely many pairs of periodic orbits $\gamma_1$, $\gamma_2$ and integers $m$ where the surgery with data $\{(\gamma_1,m),(\gamma_2,-m)\}$ yields another suspension Anosov flow. In particular, this shows that the expected claim at Remark \ref{rem_Bonatti-Iako} does not hold. Moreover, when the generating matrix $A$ of the suspension flow is conjugated to its inverse, this construction provides infinitely many non-trivial loops of length 2 in $\mathbf{G}_\circ$ with base point at $[(\phi^A,M_A)]$.

%%%%%%%%%%%%%%%%%%%%%%%%%%%%%%%%%%%%%%%%%%%%%%%%%%%%%%%%%%%%%%%%%%%%%%%%%%%%%%%%%%%%%%%%%%%%%%%%%%%%%%%%%%%%%%%%%%%%%%%%%%%%%%%%%%%%%%%%%%%%%%%%%%
\subsection{Infinitely many loops of length 2 based at a suspension}
The main result of this note is the following statement:

\begin{thmA}
\label{thm_A}
Let $A\in\SL(2,\Z)$ be a matrix with $\tr(A)\geq 3$ satisfying that $A$ is conjugated to $A^{-1}$ in $\GL(2,\Z)$. Then, given any periodic orbit $\gamma$ of the suspension flow $(\phi^A,M_A)$ there exists $m_0=m_0(\gamma)>0$ such that, for every integer $m$ with $|m|\geq m_0$, there is a set $\mathcal{Q}(\gamma,m)$ containing infinitely many $\phi^A$-periodic orbits, satisfying that 
$$[(\phi^A,M_A)]\xrightarrow[]{(\gamma,m),(\alpha,-m)}[(\phi^A,M_A)],\ \text{for every}\ \alpha\in\mathcal{Q}(m,\gamma).$$
Moreover, let $\Pi:\textrm{Per}(\phi^A, M_A)\to\textrm{Per}(\phi^A, M_A)$ be the correspondence that identifies periodic orbits before and after this surgery operation. Then, the image of $\alpha$ under the correspondence $\Pi$ equals the periodic orbit $\gamma$, up to a self-orbital equivalence of $(\phi^A,M_A)$.
\end{thmA}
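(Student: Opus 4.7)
The plan relies on two ingredients: first, the hypothesis provides a time-reversing self-equivalence of the suspension flow that naturally pairs slope $m$ surgeries with slope $-m$ surgeries; second, an explicit construction extracts from this symmetry an infinite family of orbit pairs realizing the claimed surgery identity.

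From the hypothesis, fix $P\in\GL(2,\Z)$ with $PAP^{-1}=A^{-1}$ (hence $\det(P)=-1$). The map $(x,t)\mapsto(Px,-t)$ on $\T^2\times\R$ descends to a homeomorphism $F\colon M_A\to M_A$ satisfying $F\circ\phi_t^A=\phi_{-t}^A\circ F$; in particular, $F$ sends $\phi_t^A$-orbits to $\phi_t^A$-orbits while reversing their orientation. Because $F$ reverses orbit orientation, it reverses the longitude class on every blow-up boundary torus $\partial M^*$, and consequently transforms a slope $m$ Fried surgery at $\gamma$ into a slope $-m$ Fried surgery at $F(\gamma)$. This is the mechanism pairing $m$ with $-m$ in the theorem.

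For $m\geq m_0(\gamma)$, let $(\psi_t,N)=\textsl{Surg}(\phi_t^A,M_A;(\gamma,m))$ with distinguished orbit $\delta=\mathcal{P}(\gamma)$. By construction, the slope $-m$ surgery at $\delta$ on $(\psi_t,N)$ inverts the first surgery and recovers $(\phi_t^A,M_A)$ with $\gamma$ as the image orbit. Hence to produce an element $\alpha\in\mathcal{Q}(\gamma,m)$ it suffices to exhibit a periodic orbit $\alpha'$ of $\psi_t$ on $N$ together with a self-orbital-equivalence $G$ of $(\psi_t,N)$ sending $\delta$ to $\alpha'$: performing the inverse surgery at $\alpha'$ instead of at $\delta$ gives back $(\phi_t^A,M_A)$, and $\alpha=\mathcal{P}^{-1}(\alpha')\in M_A$ is the required orbit, with image $\gamma$ up to the self-equivalence of $(\phi_t^A,M_A)$ induced by $G$.

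The central difficulty is producing an infinite family of such pairs $(\alpha'_k,G_k)$. The natural source of equivalences is $F$ itself, transported through the first surgery; combining it iteratively with the abundance of periodic orbits of $\phi_t^A$ (which accumulate densely on any prescribed orbit in an Anosov flow) should produce a countable sequence of distinct Fried-surgery presentations of the same $(\psi_t,N)$, each identifying the core $\delta$ with a different orbit $\alpha'_k$. The technical content lies in verifying that for $m\geq m_0(\gamma)$ these presentations are genuinely inequivalent — so that the $\alpha_k\in M_A$ are pairwise distinct periodic orbits — and in the bookkeeping that ensures the slope $-m$ really matches the $m$ of the first surgery in each case. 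The threshold $m_0(\gamma)$ should emerge as the value above which the surgery foliation data is uniformly well-defined and the construction produces distinct orbits.
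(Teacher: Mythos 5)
Your proposal does not reach the heart of the theorem, and the parts it does sketch rely on mechanisms that differ from (and in places are weaker than) what is actually needed.

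First, the source of the infinitude is missing. You correctly identify that "the central difficulty is producing an infinite family of such pairs $(\alpha'_k,G_k)$," but the heuristic you offer --- iterating the time-reversing symmetry $F$ against the density of periodic orbits --- does not produce such a family and is not how the argument goes. Worse, your reduction asks for a \emph{self-orbital equivalence} $G$ of $(\psi_t,N)$ sending $\delta$ to $\alpha'$; for an Anosov flow on a hyperbolic $3$-manifold the group of self-orbital equivalences up to isotopy is finite, so this requirement can only yield finitely many orbits. The actual argument needs much less: only that $\delta$ and $\beta$ be equivalent as \emph{oriented knots} in $N$ (a homeomorphism of $N$ carrying $\delta$ to $\beta$, with no compatibility with the flow). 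The infinite supply of such $\beta$ comes from the Barthelm\'e--Fenley theorem: for a skewed $\R$-covered Anosov flow on a \emph{hyperbolic} $3$-manifold, every periodic orbit $\delta$ has an infinite set $\mathcal{FH}(\delta)$ of periodic orbits isotopic to it. This forces the structure of the proof: Proposition \ref{prop_R-cov_Fried_surgery} gives that $(\psi_t,N)$ is skewed $\R$-covered for $m>0$, and $m_0(\gamma)$ is exactly the threshold from Thurston's hyperbolic Dehn surgery (Proposition \ref{prop_hyperbolic_Fried_surgery}) guaranteeing $N$ is hyperbolic --- not, as you suggest, a threshold for the surgery foliation data to be well defined. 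Once $\beta\in\mathcal{FH}(\delta)$ is knot-equivalent to $\delta$, surgering $(\beta,-m)$ yields a manifold homeomorphic to the result of surgering $(\delta,-m)$, i.e.\ to $M_A$; one then invokes Plante's classification on sol-manifolds together with the uniqueness of the torus fibration (Propositions \ref{prop_rigidity} and \ref{prop_rigidity2}) to conclude the resulting flow is the suspension of $A$ or $A^{-1}$ and to identify the image of $\alpha$ with $\gamma$ up to self-equivalence.

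Second, you misidentify the role of the hypothesis $A\simeq A^{-1}$. It is not what pairs the slope $m$ with the slope $-m$ (that pairing comes from the second surgery undoing the first at the level of the underlying Dehn filling). The hypothesis is used only at the very end: the rigidity argument shows the flow obtained after both surgeries is the suspension of a matrix conjugate to $A$ \emph{or} to $A^{-1}$, and the assumption $A\simeq A^{-1}$ is what closes the loop at the vertex $(\phi_t^A,M_A)$ in either case. Your time-reversing homeomorphism $F$ is a correct construction but it is not an ingredient of the proof.
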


In other words, for hyperbolic matrices $A$ conjugated to its inverse, we find infinitely many different loops of length $2$ in $\mathbf{G}_\circ$ based at the class $[(\phi^A,M_A)]$. Each of these loops consists in making one surgery of slope $m$ along one periodic orbit $\gamma$, together with one surgery of slope $-m$ along a different periodic orbit $\alpha$ chosen from the set $\mathcal{Q}(m,\gamma)$, and $m\in\Z$ may be taken arbitrarily big. Since $\mathcal{Q}(m,\gamma)$ is infinite, it contains orbits $\alpha$ which cannot be sent into $\gamma$ using a self-orbit equivalence of $(\phi^A,M_A)$. The fact that $\Pi(\alpha)=\gamma$ up to self-orbital equivalence means that the way in which the periodic orbits of $(\phi^A,M_A)$ identify with themselves under this double-surgery operation is not trivial, in the sense that $\Pi$ is not induced from a self-orbital equivalence of $(\phi^A,M_A)$. This means that the surgery encoded on this loop is not trivial either. 

It is important to remark that, for every fixed $m$ in $\Z$ and $\gamma$ in $\PP$, the set of loops described in Theorem \textbf{A} is small, in the following sense: 

\begin{propB}
Let $\gamma$ be a periodic orbit of $(\phi^A,M_A)$, let $m$ be an integer satisfying that $|m|\geq m_0(\gamma)$, and let $\mathcal{Q}(\gamma,m)$ be the set of periodic orbits described in Theorem \textbf{A}. Then:
$$\lim_{t\to\infty}\frac{\left\vert\left\{\alpha\in\mathcal{Q}(\gamma,m):\per(\alpha)<t\right\}\right\vert}
{\left\vert\left\{\alpha\in\PP):\per(\alpha)<t\right\}\right\vert}=0.$$
\end{propB}

Observe that the previous asymptotic is given in terms of a fixed $m$ and $\gamma$. It would be interesting to find an asymptotic on the total number of loops
$$[(\phi^A,M_A)]\xrightarrow[]{(\gamma,m),(\alpha,-m)}[(\phi^A,M_A)],\ \alpha\in\mathcal{Q}(\gamma,m)$$ 
with $m$ and $\gamma$ varying among all possible choices. 

\begin{question}
Is it possible to estimate the growth rate, as a function of the period, of the \emph{total} number of loops in $\mathbf{G}_\circ$ of length 2 (or any other fixed length) based at $[(\phi^A,M_A)]$ ? 
\end{question}

\begin{rem}
As a final remark, when the matrix $A$ is not conjugated to $A^{-1}$ in $\GL(2,\Z)$, the construction of Theorem \textbf{A} provides paths of length $2$ connecting two suspensions
$$[(\phi^A,M_A)]\xrightarrow[]{(\gamma,m),(\alpha,-m)}[(\phi^B,M_B)],\ \forall\ \alpha\in\mathcal{Q}(m,\gamma),$$ 
where $B$ is conjugated to either $A$ or $A^{-1}$. However, determining the exact conjugacy class of $B$ in this case is more difficult.
\end{rem}

%%%%%%%%%%%%%%%%%%%%%%%%%%%%%%%%%%%%%%%%%%%%%%%%%%%%%%%%%%%%%%%%%%%%%%%%%%%%%%%%%%%%%%%%%%%%%%%%%%%%%%%%%%%%%%%%%%%%%%%%%%%%%%%%%%%%%%%%%%%%%%%%%%%
\section{$\infty$-many closed paths}
The construction of the sets $\mathcal{Q}(\gamma,m)$ in Theorem \textbf{A} is an elementary application of three results due to Plante, Thurston and Barthelm\'e-Fenley. We will summarize these results first and explain how to construct the paths stated in Theorem \textbf{A} at the end.

%%%%%%%%%%%%%%%%%%%%%%%%%%%%%%%%%%%%%%%%%%%%%%%%%%%%%%%%%%%%%%%%%%%%%%%%%%%%%%%%%%%%%%%%%%%%%%%%%%%%%%%%%%%%%%%%%%%%%%%%%%%%%%%%%%%%%%%%%%%%%%%%%%%
\subsection{Anosov flows on \emph{sol}-manifolds}
\label{sec_Anosov_sol-mflds}
A \emph{suspension Anosov flow} is one constructed by choosing a hyperbolic matrix $A\in\GL(2,\Z)$ and making the quotient of the space 
$\T^2\times\R$ by the action $(x,t)\mapsto(A(x),t-1)$, where $\T^2=\R^2/\Z^2$. This gives a closed $3$-manifold $M_A$ equipped with an Anosov flow 
$\{\phi_t^A:M_A\to M_A\}_{t\in\R}$ induced from the action $\tilde{\phi}_t(x,s)=(x,s+t)$ on $\T^2\times\R$. The stable and unstable invariant bundles of the flow are induced from the contracting and expanding eigenspaces of the matrix $A$ acting on $\T^2$.  

The quotient $\T^2\times\R\to M_A$ induces a \emph{torus bundle} $\pi_+:M_A\to\R/\Z$ with monodromy equal to $A$. For each $\theta\in\R/\Z$, the torus $\pi_+^{-1}(\theta)$ is a global transverse section of the suspension flow, with first return map realizing the monodromy of the bundle. An Anosov flow is orbitally equivalent to a suspension if and only if it has a global transverse section, necessarily homeomorphic to a torus.

A closed $3$-manifold is a \emph{sol-manifold} if its fundamental group is \emph{solvable}. This is the case of the supporting 3-manifold of a suspension Anosov flow, whose fundamental group is isomorphic to an affine subgroup of the Euclidean space and hence solvable. There is a classification up to orbital equivalence of Anosov flows on sol-manifolds due to Plante:

\begin{thm}[Plante, \cite{Plante_Anosov_sol-mfls}. See also \cite{Barbot-Maquera_cod=1_Anosov_actions}]
\label{thm_classific_sol-mflds}
Let $\{\phi_t:M\to M\}_{t\in\R}$ be an Anosov flow on a closed 3-manifold with solvable fundamental group. Then, $(\phi,M)$ is orbitally equivalent to a suspension Anosov flow $(\phi^A,M_A)$, where $A\in\GL(2,\Z)$ is a hyperbolic matrix.  
\end{thm}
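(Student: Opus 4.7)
My plan is to combine the classification of closed 3-manifolds with solvable fundamental group with the rigidity theory of 3-dimensional Anosov flows. A closed 3-manifold with solvable $\pi_1$ is modelled on one of five geometries ($S^3$, $S^2\times\R$, $\R^3$, $\mathrm{Nil}$, $\mathrm{Sol}$), and is in particular either Seifert-fibered over a spherical/Euclidean base or a mapping torus of an Anosov automorphism of $T^2$. The first step is to rule out the non-Sol cases: an Anosov flow has exponentially many periodic orbits in the period, and the free-homotopy classes of these orbits are pairwise distinct conjugacy classes in $\pi_1(M)$ with word-length growing at most linearly in the period. This exponential growth of conjugacy classes is incompatible with virtual nilpotence (polynomial growth), so $M$ must be homeomorphic to $M_B$ for some hyperbolic $B\in\GL(2,\Z)$.

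The main content is the second step: showing that $(\phi_t,M)$ admits a global torus cross section. I would lift $\Fcs$ and $\Fcu$ to foliations $\tilde{\Fcs},\tilde{\Fcu}$ of $\tilde{M}\cong\R^3$ and analyse the induced $\pi_1(M)$-actions on their (a priori non-Hausdorff) leaf spaces. Whenever the leaf space of a 3-dimensional Anosov flow has a branching point, a ping-pong argument on that branch produces a free subgroup on two generators inside $\pi_1(M)$; solvability excludes this, so both $\Fcs$ and $\Fcu$ are $\R$-covered. By the Barbot--Fenley classification of $\R$-covered Anosov flows on closed 3-manifolds, such a flow is either skew (forcing $\pi_1(M)$ to contain a closed surface group of genus $\geq 2$, hence non-solvable) or orbitally equivalent to a suspension, and solvability leaves only the suspension case.

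Granted a torus cross section $T\subset M$ meeting every orbit, the first-return map $f\colon T\to T$ preserves a hyperbolic splitting transverse to the flow, and is therefore an Anosov homeomorphism of $T^2$. By the Franks--Newhouse classification, $f$ is topologically conjugate to a linear hyperbolic automorphism $A\in\GL(2,\Z)$; suspending this conjugacy provides an orbital equivalence between $(\phi_t,M)$ and $(\phi_t^A,M_A)$, which is the conclusion.

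The principal obstacle is the second step, namely propagating solvability of $\pi_1(M)$ to absence of branching in the lifted leaf spaces. A self-contained argument would retrace Plante's analysis of solvable group actions on $1$-manifolds (possibly non-Hausdorff), which is technical; the plan above outsources this work to the Barbot--Fenley machinery for $\R$-covered flows, reducing what must be proved by hand to the ping-pong construction of a free subgroup from any branching point of the leaf space.
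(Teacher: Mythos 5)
First, note that the paper does not prove this statement: it is quoted as Plante's classification theorem, with \cite{Plante_Anosov_sol-mfls} and \cite{Barbot-Maquera_cod=1_Anosov_actions} as references, so there is no internal proof to compare against. Your sketch is a reasonable modern outline (closer in spirit to Barbot's later approach via leaf spaces than to Plante's original argument, which runs through asymptotic cycles, the Schwartzman--Fried cross-section criterion and transversely affine foliations), but as written it contains steps that are either false or are themselves theorem-level inputs being treated as routine.

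The most concrete error is in your first step: it is \emph{not} true in general that distinct periodic orbits of an Anosov flow lie in pairwise distinct free homotopy classes -- indeed the present paper's whole construction rests on the Barthelm\'e--Fenley theorem that for $\R$-covered flows on hyperbolic manifolds each free homotopy class contains infinitely many periodic orbits. Since you are arguing by contradiction, you cannot assume the conclusion (suspension) to get distinctness, so the counting argument as stated does not close. The conclusion you want (exponential growth of $\pi_1(M)$, hence, via Milnor--Wolf and the topological classification of closed $3$-manifolds with solvable fundamental group, that $M$ is a torus bundle with hyperbolic monodromy) is correct, but it should be obtained from the Plante--Thurston theorem on codimension-one Anosov flows, whose proof goes through volume growth of unstable leaves in the universal cover, not through periodic orbits.

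The second step is where the real content lies, and your two reductions there are not justified. (a) ``Branching in the orbit/leaf space yields a free subgroup of $\pi_1(M)$ by ping-pong'' is not an exercise: controlling branching requires Fenley's structure theory (non-separated leaves are periodic, lozenges, etc.), and establishing that solvability forces the $\R$-covered property is essentially the theorem itself; you should either supply the ping-pong in detail or acknowledge this as the main citation. (b) The claim that a skewed $\R$-covered flow forces $\pi_1(M)$ to contain a closed surface group of genus $\geq 2$ is unsupported; for skewed flows on hyperbolic $3$-manifolds this would amount to something like Kahn--Markovic. What you actually need -- and what is true but requires an argument -- is that a skewed $\R$-covered Anosov flow cannot be carried by a torus bundle over the circle with Anosov monodromy (e.g.\ via Barbot's analysis of $\R$-covered flows on toroidal manifolds, or via the incompatibility of the skewed free-homotopy structure with the fundamental group $\Z^2\rtimes_B\Z$). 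Finally, a smaller point in your last step: the first-return map to a topological cross-section is a priori only an expansive (``topological Anosov'') homeomorphism of $\T^2$, so before invoking Franks' linearization you need Lewowicz--Hiraide or a smoothing argument.
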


On the 3-manifold $M_A$ there is another Anosov flow $\{\bar{\phi}_t^A:M_A\to M_A\}_{t\in\R}$ and another torus bundle $\pi_-:M_A\to\R/\Z$ with monodromy $A^{-1}$, such that its fibers $\pi_-^{-1}(\theta)$ are global transverse sections for $\bar{\phi}^A$ with first return conjugated to the monodromy. These are obtained by reversing the time direction of $\phi^A$ and reversing the orientation of the base in the bundle projection $\pi_+$. Both flows $\phi^A$ and $\bar{\phi}^A$ define the same foliation by orbits of $M_A$, up to changing the orientation of the orbits, and both maps $\pi_+$ and $\pi_-$ define the same fibration by tori of $M_A$, up to changing the coorientation of the fibers. There is a rigidity property of these foliations that allows to sharpen the statement in Theorem \ref{thm_classific_sol-mflds} above: 

\begin{thm}[Uniqueness of the $\T^2$-fibration]
\label{prop_Uniqueness_of_torus_fibration}
Let $A\in\GL(2,\Z)$ be a hyperbolic matrix and let $M_A$ be the 3-manifold obtained by suspension of $A$. If $\pi:M_A\to\s$ is a torus bundle, then there exists a homeomorphism $H:M_A\to M_A$ isotopic to the identity, such that $\pi=\pi_\varepsilon\circ H$, where either $\varepsilon=+$ or $\varepsilon=-$.
\end{thm}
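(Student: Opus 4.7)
The plan is to reduce the statement to a question about $\pi_1$-representations and then upgrade the resulting homotopy classification to an ambient isotopy using classical 3-manifold topology.

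First I would analyze $\pi_*$ at the level of fundamental groups. Set $G=\pi_1(M_A)=\Z^2\rtimes_A\Z$ and let $N=\ker(\pi_*)$. The long exact sequence of the torus bundle $\pi$ shows that $N\cong\Z^2$ and $G/N\cong\Z$. Since $G/N$ is abelian, $N\supseteq[G,G]$; a direct computation in the semi-direct product yields $[G,G]=(I-A)\Z^2$, so $G^{\mathrm{ab}}=\Z\oplus\Z^2/(I-A)\Z^2$. The hyperbolicity of $A$ forces $\det(I-A)\neq 0$, making the second summand finite. As $G/N\cong\Z$ is torsion-free, $N/[G,G]$ must coincide with the torsion of $G^{\mathrm{ab}}$, which forces $N$ to equal the canonical fiber subgroup $\Z^2\subset G$. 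Hence $\pi_*$ factors through the standard projection $G\twoheadrightarrow G/\Z^2=\Z$, and consequently $\pi_*=\pm\pi_{+*}=\pi_{\varepsilon *}$ for some $\varepsilon\in\{+,-\}$; since $M_A$ is aspherical (its universal cover is $\R^3$), $\pi$ is homotopic to $\pi_\varepsilon$.

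Next I would upgrade the homotopy to an ambient isotopy. Pick a regular value $\theta_0\in\s$ of $\pi$ and consider $T=\pi^{-1}(\theta_0)$, an embedded $\pi_1$-injective torus whose image in $G$ is the canonical $\Z^2$, identical to that of a fiber $T_\varepsilon$ of $\pi_\varepsilon$. Since $M_A$ is Haken, Waldhausen's classification of incompressible surfaces up to isotopy yields an ambient isotopy of $M_A$ carrying $T$ onto $T_\varepsilon$; after applying this first isotopy I may assume $T=T_\varepsilon$. Cutting $M_A$ open along this common torus produces $\hat M\cong T^2\times[0,1]$, on which $\pi$ and $\pi_\varepsilon$ both restrict to submersions $\hat M\to[0,1]$ with identical boundary values. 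A further isotopy of $\hat M$ fixing $\partial\hat M$ pointwise takes one submersion to the other; gluing the pieces gives the sought $H$, which lies in the identity component of $\homeo(M_A)$ because each intermediate isotopy does.

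The main obstacle I expect is the last isotopy step: showing that two fiber-bundle projections $T^2\times[0,1]\to[0,1]$ with prescribed boundary values are connected by an isotopy relative to $\partial$. This rests on the connectedness of the space of such product "straightenings", which ultimately reduces to the fact that $\homeo_0(T^2)$ is path-connected together with a standard linearization of submersions to an interval. The algebraic Step 1 is clean once the commutator computation is carried out, and Waldhausen's theorem is a black box, so it is this final geometric step where real care is needed.
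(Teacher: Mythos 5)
Your argument is correct in substance but takes a genuinely different route from the paper, which disposes of the statement in one line by citing Thurston's fibered face theory: since $H^1(M_A;\Z)$ has rank one, there is a unique fibration of $M_A$ by closed surfaces up to isotopy, and the only two primitive classes give $\pi_+$ and $\pi_-$. You instead unpack that black box in this special case: the commutator computation $[G,G]=(I-A)\Z^2$ (finite index in the fiber subgroup because $1$ is not an eigenvalue of $A$) pins down $\ker\pi_*$ as the characteristic $\Z^2$, hence pins down the cohomology class up to sign, and then Waldhausen plus a product-region straightening upgrades the homotopy to the required fiber-preserving homeomorphism. Your version is longer but self-contained and makes explicit \emph{why} the fiber subgroup is canonical, which the citation hides; the paper's version is shorter but leans on a much bigger theorem.

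One point in your final step is stated incorrectly, though the conclusion survives. The space of ``straightenings'' of $T^2\times[0,1]$ rel boundary is \emph{not} connected: $\pi_0$ of the group of homeomorphisms of $T^2\times[0,1]$ fixing the boundary pointwise is $\pi_1(\homeo_0(T^2))\cong\Z^2$, generated by Dehn twists along the fiber torus, and such a twist glued back into $M_A$ induces $t\mapsto vt$ on $\pi_1(M_A)=\Z^2\rtimes_A\Z$, which is an inner automorphism only when $v\in(I-A)\Z^2$; so a twist is in general \emph{not} isotopic to the identity in $M_A$. Hence ``a further isotopy rel $\partial$ takes one submersion to the other'' is false for an arbitrary pair of trivializations. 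The fix is that these fiber Dehn twists themselves commute with the projection to $[0,1]$, so they act transitively on the components of the space of straightenings and you may simply compose your $H_0$ with the inverse twist to land in the trivial mapping class rel boundary before gluing. With that correction the glued $H$ is isotopic to the identity and the proof closes.
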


This \emph{folkloric} statement follows by direct application of Thurston's \emph{fibered faces theory}, see e.g. \cite{Thurston_norm}. Since $H^1(M_A,\Z)$ has rank equal to $1$ then there is a unique fibration of $M_A$ by closed surfaces, up to isotopy. As a consequence one has that:

\begin{prop}
\label{prop_rigidity}
Let $A\in\GL(2,\Z)$ be a hyperbolic matrix and let $M_A$ be the 3-manifold obtained by suspension of $A$. Then every Anosov flow $\{\psi_t:M_A\to M_A\}_{t\in\R}$ is orbitally equivalent to either $\phi^A$ or $\bar{\phi}^A$, under an orbital equivalence isotopic to the identity on $M_A$.
\end{prop}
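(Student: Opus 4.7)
The plan is to combine Plante's classification (Theorem \ref{thm_classific_sol-mflds}) with the rigidity of the torus fibration (Theorem \ref{prop_Uniqueness_of_torus_fibration}) and the classical linearization theorem for Anosov homeomorphisms of $\T^2$.

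By Theorem \ref{thm_classific_sol-mflds}, there exist a hyperbolic matrix $C\in\GL(2,\Z)$ and an orbital equivalence $h:M_A\to M_C$ between $\psi_t$ and $\phi_t^C$. The composition $\pi_+^C\circ h:M_A\to\s$ is a torus bundle on $M_A$, so Theorem \ref{prop_Uniqueness_of_torus_fibration} furnishes a self-homeomorphism $H:M_A\to M_A$ isotopic to the identity and a sign $\varepsilon\in\{+,-\}$ with $\pi_+^C\circ h=\pi_\varepsilon^A\circ H$. After replacing $\psi_t$ by the orbitally equivalent flow $H\psi_tH^{-1}$ and $h$ by $h\circ H^{-1}$, one may assume $h$ is a bundle isomorphism from $\pi_\varepsilon^A$ to $\pi_+^C$. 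Restricting $h$ to a fiber $F\coloneqq(\pi_\varepsilon^A)^{-1}(0)$, the first return map $f:F\to F$ of $\psi_t$ on $F$ is a topological Anosov homeomorphism of $\T^2$ that $h|_F$ conjugates to the linear map $C$. Since $h$ intertwines the monodromies of the two fibrations, the mapping class of $f$ in $\GL(2,\Z)\cong\mathrm{MCG}(\T^2)$ coincides with $A^\varepsilon$, where $A^+=A$ and $A^-=A^{-1}$.

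By the Franks--Manning linearization theorem, every topological Anosov homeomorphism of $\T^2$ lying in the mapping class of a hyperbolic linear map is topologically conjugate to that linear map via a homeomorphism isotopic to the identity. This produces $g:F\to F$ isotopic to $\mathrm{id}_F$ with $g\circ f=A^\varepsilon\circ g$. Time-reparametrizing $\psi_t$ so that its return time on $F$ equals that of $\phi_t^{A^\varepsilon}$ (this does not affect the oriented orbit foliation), both flows can be presented as suspensions of their return maps on $F$, and the formula $[(y,s)]\mapsto[(g(y),s)]$ then descends to a well-defined self-homeomorphism $G:M_A\to M_A$. This $G$ preserves each fiber of $\pi_\varepsilon^A$, restricts to $g$ on $F$, and is an orbital equivalence between $\psi_t$ and $\phi_t^{A^\varepsilon}$; the latter flow equals $\phi_t^A$ when $\varepsilon=+$ and $\bar{\phi}_t^A$ when $\varepsilon=-$.

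The main obstacle I anticipate is verifying that $G$ is isotopic to the identity on $M_A$. Plugging an isotopy $g_r$ from $\mathrm{id}_F$ to $g$ directly into the above formula does not work, because for intermediate $r$ the map $g_r$ no longer conjugates $f$ to $A^\varepsilon$ and the fiberwise assignment fails to descend to the quotient. The way around this is to observe that, in the $\phi_t^{A^\varepsilon}$-trivialization of the bundle $\pi_\varepsilon^A$, the homeomorphism $G$ is encoded by a continuous section $\sigma_G:\R\to\homeo_0(\T^2)$ satisfying the equivariance $\sigma_G(s+1)=A^{-\varepsilon}\sigma_G(s)A^\varepsilon$ with $\sigma_G(0)=g$. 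Since $\homeo_0(\T^2)$ is path-connected, one can construct a continuous family $\sigma_r$ of equivariant sections interpolating between the constant section $\mathrm{id}$ and $\sigma_G$, giving the desired isotopy $G\simeq\mathrm{id}$ in $\homeo(M_A)$ and completing the proof.
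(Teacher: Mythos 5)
Your proposal follows the same skeleton as the paper's proof --- Plante's classification (Theorem \ref{thm_classific_sol-mflds}) composed with uniqueness of the torus fibration (Theorem \ref{prop_Uniqueness_of_torus_fibration}) --- but you are more explicit than the paper about the final step: the paper compresses ``once the fibrations are matched, the flows are matched'' into one sentence, while you correctly isolate the remaining work of conjugating the return map to the linear model by a map isotopic to the identity and then suspending that conjugacy. That middle part is fine (in fact you do not even need Franks--Manning: $h|_F$ already conjugates $f$ to $C$, and post-composing with the linear map realizing the conjugation between $C$ and $A^{\varepsilon}$ in $\GL(2,\Z)$ yields a conjugacy $g$ in the trivial mapping class). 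The genuine gap is exactly where you anticipated trouble, in the claim that $G$ is isotopic to the identity: path-connectedness of $\homeo_0(\T^2)$ is \emph{not} enough. The space of equivariant sections $\sigma:\R\to\homeo_0(\T^2)$ with $\sigma(s+1)=A^{-\varepsilon}\sigma(s)A^{\varepsilon}$ is the section space of a bundle over $\s$ with fibre $\homeo_0(\T^2)\simeq\T^2$ and monodromy acting on $\pi_1(\homeo_0(\T^2))\cong\Z^2$ by $A^{\mp\varepsilon}$; its set of path components is $\Z^2/(A-I)\Z^2\cong \mathrm{tors}\,H_1(M_A;\Z)$, a finite group of order $|\det(A-I)|$, which is nontrivial for most hyperbolic $A$. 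If $\sigma_G$ lies in a nontrivial component, $G$ acts on $\pi_1(M_A)=\Z^2\rtimes_A\Z$ by fixing $\Z^2$ and sending the monodromy generator $\tau$ to $v\tau$ with $v\notin(I-A^{\mp\varepsilon})\Z^2$; this is a non-inner automorphism, so $G$ is not even homotopic to the identity. A concrete failure: take $\psi_t$ to be the push-forward of $\phi_t^A$ by a ``vertical translation'' $[x,t]\mapsto[x+c(t),t]$ with nontrivial winding $v$; your construction returns exactly the inverse of that translation.

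The gap is repairable using freedom you have not exploited: the conjugacy $g$ is only determined up to post-composition with homeomorphisms commuting with $A^{\varepsilon}$. For each $w$ in the finite subgroup $\ker(A^{\varepsilon}-\mathrm{id})\subset\T^2$, the translation $t_w$ commutes with $A^{\varepsilon}$, so $t_w\circ g$ is again a conjugacy from $f$ to $A^{\varepsilon}$ isotopic to $\mathrm{id}_F$, and it changes $G$ by the homeomorphism $T_w:[x,s]\mapsto[x+w,s]$, which preserves the suspension flow. The constant sections at $t_w$ realize \emph{every} component of the section space as $w$ ranges over $\ker(A^{\varepsilon}-\mathrm{id})$ (the component invariant $w\mapsto(I-A^{-\varepsilon})\tilde w \bmod (I-A^{-\varepsilon})\Z^2$ is a bijection), so a suitable choice of $w$ places the corrected section in the trivial component; only then does your interpolation argument go through and produce the isotopy $G\simeq\mathrm{id}$. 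With that correction the proof is complete --- and it supplies details that the paper's own proof leaves implicit.
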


\begin{proof}
If $\{\psi_t:M_A\to M_A\}_{t\in\R}$ is any Anosov flow, since $\pi_1(M_A,x_0)$ is solvable then by Theorem \ref{thm_classific_sol-mflds} it is equivalent to a suspension flow $(\phi^B,M_B)$ with $B\in\GL(2,\Z)$ hyperbolic. Hence, there is a fibration $\pi:M_A\to\s$ by tori $\pi^{-1}(\theta)$, $\theta\in\R/\Z$ each of them being a global transverse section for $\psi$ with monodromy conjugated to $B$. By Theorem \ref{prop_Uniqueness_of_torus_fibration} there is a homeomorphism $H:M_A\to M_A$ isotopic to the identity such that $\pi=\pi_\varepsilon\circ H$, where $\varepsilon=+,-$. Hence, the monodromy $B$ of the bundle $\pi$ is conjugated either $A$ of $A^{-1}$, and the homeomorphism $H$ realizes an orbital equivalence between $(\psi,M_A)$ and either $(\phi^A,M_A)$ or $(\bar{\phi}^A,M_A)$. \qedhere
\end{proof}

Finally, we will use a property of the foliation by orbits of $(\phi^A,M_A)$, stated in Proposition \ref{prop_rigidity2} below. Let's introduce some terminology.  

If $\gamma$ is an oriented simple closed curve in a $3$-manifold $M$ and $W_\gamma\simeq \D^2\times\s$ is a tubular neighborhood, it is possible to choose an oriented \emph{meridian-longitude} basis $\{a_\gamma,b_\gamma\}$ of $H_1(W_\gamma\setminus\gamma)\simeq H_1(\partial W_\gamma)$, by setting $a_\gamma\in H_1(\partial W)$ to be homology class of the boundary of a meridian disk $a_\gamma=[\partial\D^2\times\{\theta\}]$, and $b_\gamma\in H_1(\partial W)$ to be the class of any simple closed curve having oriented intersection number $a_\gamma\cdot b_\gamma=+1$. For every closed curve $\sigma$ in $\partial W_\gamma$ define its \emph{multiplicity} $m\in\Z$ to be the intersection number $m=a_\gamma\cdot[\sigma]$. Given two oriented simple closed curves $\gamma$ and $\delta$ respectively contained in $M$ and $N$, we say that a homeomorphism $h:M\setminus\gamma\to N\setminus\delta$ \emph{preserves the orientation at infinity} if when restricted to tubular neighborhoods $h:W_\gamma\setminus\gamma\to W_\delta\setminus\delta$ he have that $a_\delta\cdot h_*(b_\gamma)=+1$. Observe that this is a necessary condition for $h$ to extend onto a homeomorphism $M\to N$ that takes $\gamma\mapsto\delta$ preserving their orientations. 

\begin{prop}
\label{prop_rigidity2}
Let $(\phi^A,M_A)$ be the suspension Anosov flow generated by a hyperbolic matrix $A\in\GL(2,\Z)$, and let $\gamma$ and $\delta$ be two periodic orbits. If there exists a homeomorphism $M_A\setminus\gamma\to M_A\setminus\delta$ preserving the orientations at infinity of $\gamma$ and $\delta$, then there exists a self-orbital equivalence $\bar{H}:(\phi^A,M_A)\to(\phi^A,M_A)$ such that $\bar{H}(\gamma)=\delta$.
\end{prop}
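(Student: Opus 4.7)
The plan is to transport the Anosov flow along $h$, fill in the removed orbit on the target via a blow-up/blow-down, and then appeal to Proposition~\ref{prop_rigidity}. Specifically, set $\psi_t^0 := h_*(\phi_t^A|_{M_A\setminus\gamma})$ on $M_A\setminus\delta$; this flow has a cylindrical end at $\delta$ that inherits the hyperbolic-saddle structure of $\phi_t^A$ near $\gamma$. Blowing up the end produces a compact manifold $M^*$ with torus boundary carrying a Morse--Smale boundary flow whose periodic orbits lie in the homology class $h_*(b_\gamma)=r\,a_\delta+b_\delta$ for some $r\in\Z$; the coefficient of $b_\delta$ is $+1$ precisely because $h$ preserves orientations at infinity.

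Since $|a_\delta\cdot h_*(b_\gamma)|=1$, the foliation of $\partial M^*$ by meridians $a_\delta$ of $\delta$ in $M_A$ is transverse to this Morse--Smale flow and meets each of its periodic orbits exactly once, so it satisfies the transversality and single-intersection conditions for the blow-down. The resulting quotient is a topological Anosov flow $\psi_t$ on $M_A$ with $\delta$ as a periodic saddle orbit and $\psi_t|_{M_A\setminus\delta}=\psi_t^0$. Because transitivity transfers across $h$ and the $+1$ sign matches orbit orientations, $\psi_t$ is a transitive topological Anosov flow whose orbit orientation agrees with that of $\phi_t^A$; by \cite{Sh_top-Anosov=Anosov} it is orbitally equivalent to a smooth Anosov flow.

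Proposition~\ref{prop_rigidity} now produces an orbital equivalence $K:(\psi_t,M_A)\to(\phi_t^A,M_A)$ isotopic to the identity (the time-reversed alternative $\bar\phi_t^A$ is excluded by the orbit-orientation matching). Because $K$ is isotopic to the identity, $K(\delta)$ is freely homotopic to $\delta$; both are $\phi_t^A$-periodic orbits, and periodic orbits of an Anosov flow are unique in their free homotopy classes, so $K(\delta)=\delta$ as sets. The desired self-equivalence is then $\bar H=K\circ\bar h$, where $\bar h:M_A\to M_A$ is the homeomorphism extending $h$ and sending $\gamma\mapsto\delta$ that is supplied by the blow-up/blow-down construction of $\psi_t$. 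The main technical obstacle I anticipate is verifying the existence of this $\bar h$: a priori, $h_*(a_\gamma)$ need not equal $\pm a_\delta$, so the naive extension of $h$ across $\gamma$ and $\delta$ does not automatically close up as a homeomorphism of $M_A$; the argument must invoke the specific choice of blow-down slope $a_\delta$ together with the uniqueness of the torus fibration of $M_A$ (Theorem~\ref{prop_Uniqueness_of_torus_fibration}) to reconcile the two meridional conventions.
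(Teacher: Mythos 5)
There is a genuine gap, and it sits exactly where you flag it in your last paragraph: the existence of the extension $\bar h:M_A\to M_A$ with $\bar h(\gamma)=\delta$. A homeomorphism $W_\gamma\setminus\gamma\to W_\delta\setminus\delta$ extends to a homeomorphism of the solid tori carrying core to core \emph{only if} it takes the meridian class $a_\gamma$ to $\pm a_\delta$; the hypothesis that $h$ preserves orientations at infinity controls only the $b_\delta$-coefficient of $h_*(b_\gamma)$ and says nothing about $h_*(a_\gamma)$. Everything downstream in your argument ($\bar H=K\circ\bar h$) depends on this unproved fact, so as written the proof does not close. This meridian-preservation statement is precisely the content of the paper's proof: one intersects a torus fiber of $\pi_+$ with $M_A\setminus\mathring W_\gamma$ to get a surface $\Sigma$ whose boundary consists of $p>0$ meridians of $\gamma$; since $h(\partial\Sigma)$ bounds $h(\Sigma)$ it is null-homologous in $M_A$, while in $H_1(W_\delta)$ it equals $p\cdot m\cdot[\delta]$ where $m$ is the multiplicity of $h(a_\gamma)$; injectivity of $H_1(W_\delta)\to H_1(M_A)$ and $[\delta]\neq 0$ force $m=0$. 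Your appeal to ``uniqueness of the torus fibration'' is a plausible direction (it is morally the same fiber that does the work) but you do not carry it out, and the blow-up/blow-down machinery you set up does not substitute for it: the quotient $M^*/\mathcal{C}$ is indeed $M_A$, but the induced map $M_A\setminus\gamma\to M_A\setminus\delta$ into that quotient is still just $h$, which fails even to extend \emph{continuously} over $\gamma$ when $h_*(a_\gamma)\neq\pm a_\delta$. (An alternative way to fill the gap from within your setup: your blow-down is a Fried surgery on $(\phi_t^A,M_A)$ along $\gamma$ with foliation class $h_*^{-1}(a_\delta)=\pm a_\gamma+v\,b_\gamma$; if $v\neq 0$ Proposition~\ref{prop_R-cov_Fried_surgery} would make the result skewed $\R$-covered, contradicting Plante's theorem on the sol-manifold $M_A$, so $v=0$. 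But this argument is not in your write-up.)

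Two smaller points. First, your assertion that ``periodic orbits of an Anosov flow are unique in their free homotopy classes'' is false in general --- Theorem~\ref{thm_free_homot_classes} is precisely about the failure of this for $\R$-covered flows on hyperbolic manifolds; what is true, and what the paper uses, is uniqueness per free \emph{isotopy} class for \emph{suspension} flows, which suffices here since $K$ lands in $(\phi_t^A,M_A)$. Second, once the meridian fact is established the blow-up/blow-down detour is unnecessary: one can modify $h$ inside the tubular neighborhoods to get $H:M_A\to M_A$ directly and then push the flow forward, which is what the paper does.
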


\begin{proof}
To show the proposition it is enough to show the existence of a homeomorphism $H:M_A\to M_A$ such that $H(\gamma)=\delta$, preserving the orientations of these curves. To see this, let $\psi$ be the flow on $M_A$ obtained by pushing forward the orbits of $\phi^A$ with $H$. Then, by Proposition \ref{prop_rigidity} there is an orbital equivalence
\footnote{Note that $\psi$ is technically a topological Anosov flow. However, Proposition \ref{prop_rigidity} is still valid due to the results in \cite{Sh_top-Anosov=Anosov}. See also \cite{Bru_expansive_sol-mflds}.}
$H^*:(\psi,M_A)\to(\phi^A,M_A)$ isotopic to the identity. Since $H^*(\delta)$ is isotopic to $\delta$ then $H^*(\delta)=\delta$, because for every suspension Anosov flow there exist at most one periodic orbit per (free) isotopy class (see e.g. \cite{Barthelme-Fenley_II}). Hence, the homeomorphism $\bar{H}=H^*\circ H$ is a self-orbital equivalence of the suspension flow taking $\bar{H}:\gamma\mapsto\delta$.

To construct the homeomorphism $H$ we will use the following: 

\begin{cl}
Let $h_0:W\setminus\alpha\to W\setminus\alpha$ be a homeomorphism. Then there exists a homeomorphism $H_0:W\to W$ that coincides with $h_0$ on $\partial W$ if and only if the curves $h_0(\partial\D^2\times\{\theta\})$ have multiplicity $m=0$.
\end{cl}

\begin{proof}[Proof of claim]
To prove this, observe that the existence of $H_0$ implies that for every $\theta\in\R/\Z$ then $H_0(\D^2\times\{\theta\})$ is a meridian disks of $W$, and hence $h_0(\partial\D^2\times\{\theta\})=\partial H_0(\D^2\times\{\theta\})$ has multiplicity equal to zero. Reciprocally, take one curve $h_0(\partial\D^2\times\{\theta_0\})$. Since it has zero multiplicity, this curve is the boundary of an embedded meridian disk $D_0\to W$. This allows to construct a homeomorphism $\partial W\cup \D^2\times\{\theta_0\}\to\partial W\cup D_0$ that coincides with $h_0$ on $\partial W$. By splitting the solid torus $W$ along $\D^2\times\{\theta_0\}$ and $D_0$ we obtain two 3-balls with a homeomorphism defined between their boundaries. An extension of this homeomorphism to the interior of the balls provides the homeomorphism $H_0$. \qedhere
\end{proof}

Let $h:M_A\setminus\gamma\to M_A\setminus\delta$ be a homeomorphism. Let $W_\gamma$ and $W_\delta$ be small closed tubular neighborhoods around $\gamma$ and $\delta$ respectively and assume without loss of generality that $W_\delta\setminus\delta=h(W_\gamma\setminus\gamma)$. Let $\Sigma=\pi_+^{-1}(\theta)\cap(M_A\setminus\mathring{W}_\gamma)$ be the intersection of a torus fiber with the complement of the tubular neighborhood. We have that $\Sigma$ is a closed surface with $p>0$ boundary components $C_1,\dots,C_p$ in $\partial W_\gamma$ isotopic to meridians. Let $m$ be the multiplicity of $h(C_i)\subset\partial W_\delta$. Since the homomorphism $H_1(\partial W_\delta)\to H_1(W_\delta)$ induced by the inclusion has kernel generated by the meridian $a$, we can see that $h(\partial\Sigma)$ is homologous to $p\cdot m\cdot [\delta]$ in $H_1(W_\delta)$. Since the homomorphism $H_1(W_\delta)\to H_1(M_A)$ induced by inclusion is injective, we have that 
$$0=[h(\partial\Sigma)]=p\cdot m\cdot [\delta]\in H_1(M_A).$$
Since $[\delta]\neq 0$ for every periodic orbit $\delta$ of the suspension flow, we conclude that $m=0$. Therefore, using the claim above we can  modify the homeomorphism $h$ inside the tubular neighborhoods $W_\gamma$ and $W_\delta$, and obtain a globally defined $H:M_A\to M_A$ as required. \qedhere
\end{proof}

%%%%%%%%%%%%%%%%%%%%%%%%%%%%%%%%%%%%%%%%%%%%%%%%%%%%%%%%%%%%%%%%%%%%%%%%%%%%%%%%%%%%%%%%%%%%%%%%%%%%%%%%%%%%%%%%%%%%%%%%%%%%%%%%%%%%%%%%%%%%%%%%%%%
\subsection{Hyperbolic Dehn surgery}
Let $M$ be a closed 3-manifold, $\gamma$ a simple closed curve, and assume that $M\setminus\gamma$ admits a complete hyperbolic metric. A classical theorem of Thurston (see \cite{Thurston_3-mfld_topology}, \cite{ratcliffe}) states that making a Dehn surgery on $M$ along $\gamma$ will produce a hyperbolic closed 3-manifold, for all except a finite number of surgery slopes. This method for constructing hyperbolic 3-manifolds is known as \emph{hyperbolic Dehn surgery}. 

Hyperbolic Dehn surgery can be implemented in the context of Anosov flows and Fried surgeries to produce Anosov flows on hyperbolic 3-manifolds. This method goes back to Goodman\footnote{In Goodman's work \cite{Goodman_surgery} other version of the Dehn surgery is used. See \cite{Sh_thesis} for an explanation of how the Goodman and Fried surgeries are related.} in \cite{Goodman_surgery}, where the first examples of Anosov flows on hyperbolic 3-manifolds were obtained. These examples can be described as follows: take the suspension flow $(\phi^A,M_A)$ associated to the matrix $A=[2,1,1,1]$ and make a Fried surgery of slope $m\neq 0$ along the periodic orbit $\gamma_0$ associated to the (unique) fixed point of $A$. It is a well-known fact (cf. Chapter 8 of \cite{francis}) that the complement of $\gamma_0$ in $M_A$ is homeomorphic to the complement of the \emph{figure-eight knot} in the sphere $\mathbb{S}^3$, which admits a complete hyperbolic metric (cf. \cite{Thurston_3-mfld_topology}). Hence, when the slope $m$ of the surgery is chosen big enough, we obtain an Anosov flow on a hyperbolic 3-manifold. We refer to \cite{binyu_figure-eight} for a treatment of the Anosov flows obtained in this particular example and its relation to the figure-eight knot complement. 

The method described above is completely general and does not depend on the special matrix $A=[2,1,1,1]$. It can be shown that, for every $A\in\SL(2,\Z)$ with $\tr(A)\geq 3$, the complement of any periodic orbit $\gamma$ of the suspension Anosov flow $(\phi^A,M_A)$ admits a complete hyperbolic metric. This follows from Thurston's hyperbolization theorem at \cite{thurston_hyperbolization_II}. Since $M_A\setminus\gamma$ fibers over the circle with fibers homeomorphic a punctured torus, and since the monodromy $A$ on the fibers is of pseudo-Anosov type, then case (iii) of Theorem 0.1 at \cite{thurston_hyperbolization_II} applies and $M_A\setminus\gamma$ is hyperbolic. We refer to \cite{shannon_pinsky_fenley} for a general treatment of the problem of whether the complement of a periodic orbit of an Anosov flow admits a complete hyperbolic metric. See also \cite{gueritaud_futer}, \cite{hatcher_1-punctured_torus}.

Putting all together, we get the following statement:

\begin{prop}
\label{prop_hyperbolic_Fried_surgery}
Consider the suspension flow $(\phi^A,M_A)$ generated by $A\in\SL(2,\Z)$ with $\tr(A)\geq 3$. Then, for every $\phi^A$-periodic orbit $\gamma$ the Fried surgery along $\gamma$ with slope $m$ will produce an Anosov flow on a hyperbolic 3-manifold, for all except a finite number of $m\in\Z$.
\end{prop}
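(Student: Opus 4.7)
The plan is to reduce the proposition to two classical theorems of Thurston: his hyperbolization theorem for 3-manifolds with toroidal boundary, and his hyperbolic Dehn surgery theorem. Concretely, let $N \coloneqq M_A \setminus \mathring{W}_\gamma$ be the compact 3-manifold obtained by removing an open tubular neighborhood of $\gamma$. The goal is to prove that the interior of $N$ admits a complete hyperbolic metric of finite volume and then to invoke hyperbolic Dehn surgery to conclude that a hyperbolic closed 3-manifold is obtained for all but finitely many integer Fried surgery parameters.

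The hyperbolicity of $N$ can be approached in at least two equivalent ways. First, the restriction $\pi_+|_N$ realises $N$ as the mapping torus of the first-return map of the suspension flow to a fiber of $\pi_+$, punctured along the $p$ intersection points of $\gamma$ with that fiber (where $p$ is the period of $\gamma$ with respect to the fibration). Since $\tr(A)\geq 3$, the induced homeomorphism of the punctured torus is pseudo-Anosov in the Nielsen--Thurston sense, because $A$ acts without periodic nonzero classes on $H_1(\T^2;\Z)$ and the only invariant isotopy classes of essential simple closed curves lie in the peripheral subgroup. Thurston's hyperbolization theorem for pseudo-Anosov mapping tori then gives that the interior of $N$ carries a complete hyperbolic structure of finite volume.

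The second approach, closer to the hint given in the text, is to verify directly that $N$ is irreducible and atoroidal. Irreducibility is immediate since $M_A$ is aspherical with universal cover $\R^3$, and this property persists after removing the 1-dimensional set $\gamma$. For atoroidality, I would apply the rigidity principle that underlies Proposition \ref{prop_Uniqueness_of_torus_fibration}. Since $\tr(A)\geq 3$ one has $H_1(M_A;\Z)\cong \Z\oplus \textrm{coker}(A-I)$ with rank one, and $[\gamma]$ has infinite order in it. A Mayer--Vietoris computation for the decomposition $M_A=N\cup W_\gamma$ then yields $H^1(N;\R)\cong \R$. By Thurston's fibered faces theorem this 1-dimensional cohomology supports a single fibered face, necessarily realised by the restriction of $\pi_+$ to $N$, whose fibers are punctured tori of strictly negative Euler characteristic. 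The standard correspondence between the Thurston norm and incompressible surfaces then forces every incompressible non-boundary-parallel surface of $N$ carrying a non-trivial class in $H_2(N,\partial N;\R)$ to be isotopic to such a punctured fiber, ruling out essential tori in non-trivial classes; a null-homologous essential torus must be handled using the dynamical input that $[\gamma]\neq 0$ in $H_1(M_A)$, which controls how such a torus can bound in $M_A$.

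Once $N$ is known to be hyperbolic, Thurston's hyperbolic Dehn surgery theorem ensures that only finitely many Dehn filling slopes in $H_1(\partial N;\Z)$ fail to produce a hyperbolic closed 3-manifold. The integral Fried surgery along $\gamma$ with parameter $m$ realises the Dehn filling along the primitive class $[a_\gamma+m\cdot b_\gamma]$, so at most finitely many $m\in\Z$ can be exceptional. In my view the subtle step is completing the atoroidality argument for null-homologous tori in $N$: one must rule out the possibility that an essential torus in $N$ bounds in $M_A$ a knotted solid torus around $\gamma$, which requires combining the fibered-face rigidity with the non-triviality of $[\gamma]$. The first approach via pseudo-Anosov mapping tori sidesteps this issue entirely and is probably the cleanest route to a rigorous proof.
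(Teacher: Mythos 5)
Your proposal is correct, and your first route is genuinely different from (and cleaner than) the one the paper takes. The paper's own justification is exactly your second approach: it asserts that $M_A\setminus\gamma$ is atoroidal because ``the manifold obtained by removing the periodic orbit has only one class of incompressible embedded surfaces, namely, the torus fibers of the suspension but punctured along the removed orbit,'' and then invokes Thurston's hyperbolic Dehn surgery; it does not spell out the Thurston-norm computation, and in particular it does not address the point you flag, namely that a fibered-face argument only sees surfaces carrying nontrivial classes in $H_2(N,\partial N)$, so null-homologous essential tori must be excluded separately. Your identification of that as the delicate step is accurate. Your first approach avoids it entirely: $M_A\setminus\gamma$ is the mapping torus of the restriction of (a conjugate of) $A$ to $\T^2$ punctured along the finite $A$-orbit $\gamma\cap\pi_+^{-1}(\theta)$, the linear stable/unstable foliations restrict to an invariant pair of transverse measured foliations whose only degenerate points are the (allowed) punctures, so the monodromy is pseudo-Anosov, and Thurston's hyperbolization of fibered $3$-manifolds (for punctured-torus bundles this goes back to J{\o}rgensen and Thurston's figure-eight example, of which this is the case $A=\bsm 2&1\\1&1\esm$, $p=1$) gives the complete finite-volume hyperbolic structure; the Fried surgery with slope $m$ is the Dehn filling along $a_\gamma+m\cdot b_\gamma$, so at most finitely many $m$ are exceptional. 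What each approach buys: the paper's atoroidality route stays within the incompressible-surface rigidity already used for Proposition~\ref{prop_rigidity} and so requires no new machinery, but as written it is only a sketch; your mapping-torus route is self-contained, classical, and complete as stated.
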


%%%%%%%%%%%%%%%%%%%%%%%%%%%%%%%%%%%%%%%%%%%%%%%%%%%%%%%%%%%%%%%%%%%%%%%%%%%%%%%%%%%%%%%%%%%%%%%%%%%%%%%%%%%%%%%%%%%%%%%%%%%%%%%%%%%%%%%%%%%%%%%%%%%
\subsection{$\R$-covered Anosov flows on hyperbolic 3-manifolds}
Let $\{\phi_t:M\to M\}_{t\in\R}$ be an Anosov flow on a closed 3-manifold and denote by $\mathcal{O}$ the corresponding foliation of $M$ by $\phi$-orbits. Let $\tilde{M}$ be the universal cover of $M$ and let $\tilde{\mathcal{O}}$ be the lift of the foliation $\mathcal{O}$. The following is a fundamental result due to Barbot and Fenley:

\begin{figure}[t]
\centering
\includegraphics[width=\textwidth]{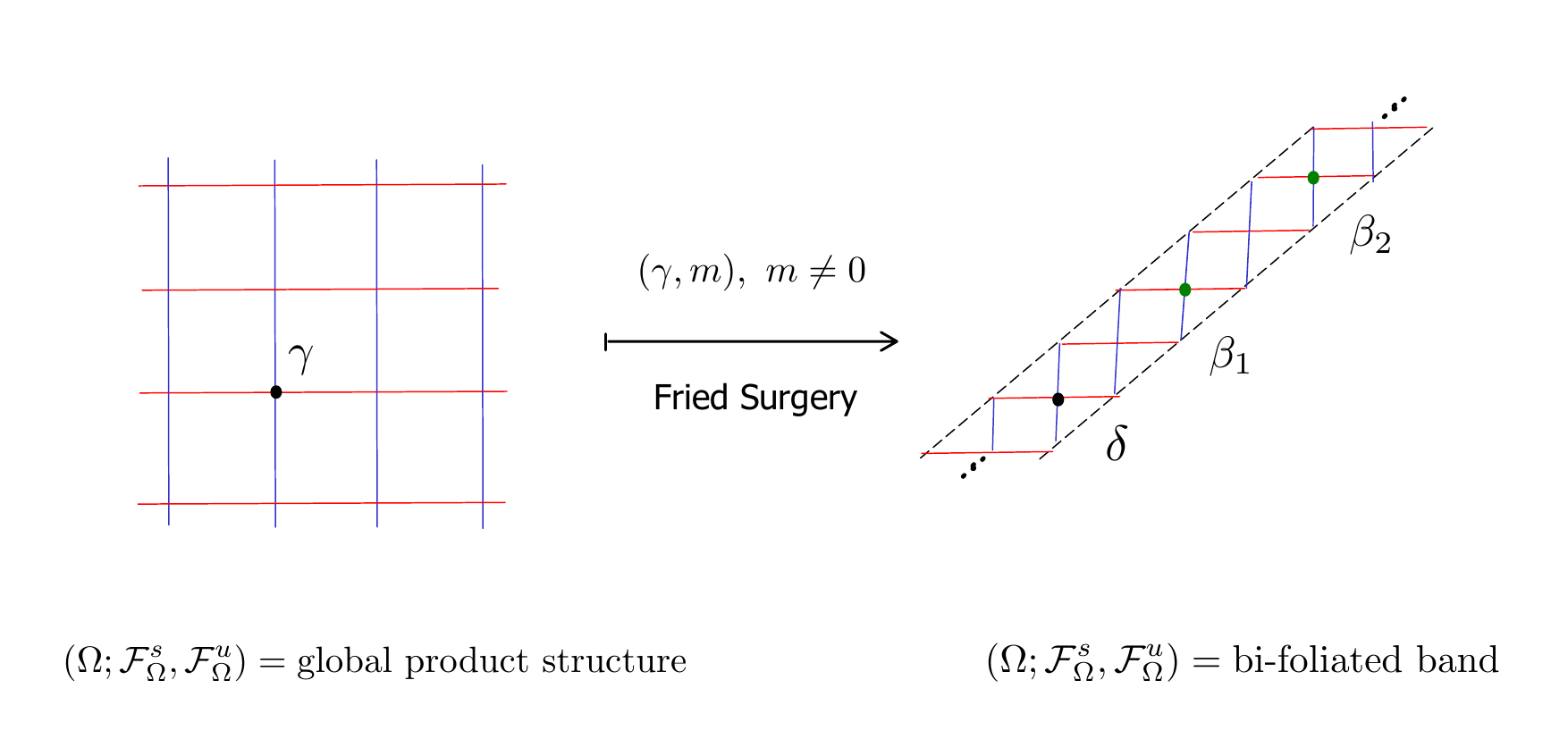}
\caption{$\R$-covered Anosov flows}
\label{fig_Fried_surgery(2)}
\end{figure}

\begin{thm}[Barbot, \cite{Barbot_characterisation_feuilletages_faibles}- Fenley, \cite{Fenley_Anosov-flows}]
\label{thm_Fenley-Barbot}
There exists a homeomorphism $\tilde{M}\to\R^3$ taking $\tilde{\mathcal{O}}$ to the foliation by \emph{vertical} lines $\mathcal{V}=\{z\times\R:z\in\R^2\}$ of $\ \R^3=\R^2\times\R$.   
\end{thm}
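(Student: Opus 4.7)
The plan is to identify $\tilde M$ with $\R^3$ in such a way that $\tilde{\mathcal O}$ becomes the standard foliation by vertical lines. The strategy splits into two halves: first, establish that the orbit space $\mathcal{O}_{\tilde\phi}:=\tilde M/\tilde{\mathcal O}$ is homeomorphic to $\R^2$; then show that the quotient map $\tilde M\to \mathcal{O}_{\tilde\phi}$ is a trivial $\R$-bundle. Combined, these two facts yield the required homeomorphism by choosing any global section of the bundle.

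I would begin by showing that each leaf of the lifted foliations $\tilde\Fcs$ and $\tilde\Fcu$ is a topological plane. Every leaf of $\Fcs$ in $M$ is a cylinder, a M\"obius band or a plane, each of which is a $K(\pi,1)$; a standard argument using that $\tilde M\to M$ restricted to a leaf of $\tilde\Fcs$ realizes the universal cover of the corresponding leaf in $M$ shows that leaves of $\tilde\Fcs$ are simply connected, hence planes. Since $\tilde M$ is simply connected and $\tilde\Fcs$ is a codimension-one foliation with simply connected leaves, Palmeira's theorem identifies $(\tilde M,\tilde\Fcs)$ with $(\R^3,\mathcal H)$ for some foliation of $\R^3$ by planes; in particular $\tilde M\cong\R^3$. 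Each $\tilde\Fcs$-leaf splits as a product, foliated by orbits of $\tilde\phi_t$ in one direction and by strong stable leaves of $\tilde\Fs$ in the other, and analogously for $\tilde\Fcu$.

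The crux of the argument, and the place where I expect the main technical obstacle, is to prove that $\mathcal{O}_{\tilde\phi}$ is Hausdorff. A priori the codimension-two foliation by orbits could have non-separable leaves in the universal cover, and ruling this out is precisely where Anosov hyperbolicity enters. The idea is that if two distinct orbits $\tilde\gamma_1,\tilde\gamma_2$ projected to non-separated points of $\mathcal{O}_{\tilde\phi}$, one could produce flow boxes linking them along arcs of strong stable and strong unstable manifolds, and then invoke the fact that in $\tilde M$ the intersection of a weak stable and a weak unstable leaf is exactly one orbit, to force $\tilde\gamma_1=\tilde\gamma_2$. Once Hausdorffness is established, $\tilde\Fcs$ and $\tilde\Fcu$ descend to a pair of transverse $1$-dimensional foliations on $\mathcal{O}_{\tilde\phi}$, endowing it with the structure of a topological $2$-manifold.

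To conclude, observe that the projection $\tilde M\to\mathcal{O}_{\tilde\phi}$ has contractible fibers (each orbit is a copy of $\R$), so it is a weak homotopy equivalence and $\mathcal{O}_{\tilde\phi}$ is simply connected. Together with non-compactness (coming from the existence of a non-singular $1$-dimensional foliation), this forces $\mathcal{O}_{\tilde\phi}\cong\R^2$ by the classification of surfaces. The projection is then a principal $\R$-bundle over $\R^2$ via the action of $\tilde\phi_t$, and since $\R^2$ is contractible this bundle is trivial; any global section yields the desired homeomorphism $\tilde M\cong\R^2\times\R$ sending the orbits of $\tilde\phi_t$ precisely to the vertical lines $\{z\}\times\R$.
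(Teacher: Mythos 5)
The paper does not prove this statement at all --- it is imported as a black box from Barbot and Fenley --- so there is no internal argument to compare yours against; I can only assess the proposal on its own terms. Your outline does reproduce the architecture of the actual Barbot--Fenley proof: lifted weak leaves are planes, Palmeira's theorem gives $\tilde{M}\cong\R^3$, Hausdorffness of the orbit space makes it a simply connected non-compact surface, and the projection is then a trivial $\R$-bundle over a contractible base. The first and last steps are essentially right, with one omission worth flagging: to know that the leaves of the lift of $\Fcs$ are planes you need the leaves of $\Fcs$ to be $\pi_1$-injective, and that is where Novikov's theorem enters (no compact leaves, hence no Reeb components, hence incompressible leaves); your ``standard argument'' is concealing a genuine input.

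The genuine gap is the Hausdorffness of $\tilde{M}/\tilde{\mathcal{O}}$. This is not a technical verification to be deferred --- it is the entire content of the theorem --- and the paragraph you devote to it is a description of a hoped-for argument, not a proof. The lemma you propose to invoke, that in $\tilde{M}$ a lifted weak stable leaf and a lifted weak unstable leaf meet in at most one orbit (note: at most one, not exactly one --- emptiness is possible and is exactly what distinguishes the skewed case), is itself a substantial theorem of Fenley, established by an analysis of comparable depth to the Hausdorffness statement; assuming it is close to assuming the conclusion. You also do not explain how non-separated orbits plus this lemma actually yield a contradiction: the published proofs pass through a careful study of non-separated leaves in the one-dimensional leaf spaces inside weak leaves and rule out the bad configurations via Novikov/Haefliger-type arguments, none of which appears in your sketch. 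As written, the proposal is an accurate table of contents for the proof rather than a proof; either supply the Hausdorffness argument in full or cite Barbot and Fenley, as the paper does.
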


As a consequence, the quotient $\Omega\coloneqq\tilde{M}/\tilde{\mathcal{O}}$ is homeomorphic to $\R^2$. The center-stable and center-unstable foliations of $M$ lift to a pair of codimension 1 transverse foliations of $\tilde{M}$ that intersect along $\tilde{\mathcal{O}}$-leaves, and hence induce a pair of 1-dimensional transverse foliations of $\Omega$. We denote them by $\Fs_\Omega$ and $\Fu_\Omega$. 

The triple $\left(\Omega;\Fs_\Omega,\Fu_\Omega\right)$ is called the \emph{bi-foliated plane} associated to $(\phi,M)$. A general bi-foliated plane may have several different natures. A case of particular interest is given by the following result:

\begin{thm}[\cite{Barbot_characterisation_feuilletages_faibles}, \cite{Fenley_Anosov-flows}]
\label{thm_R-covered}
Consider the quotient spaces $\Lambda^s\coloneqq\Omega/\Fs_{\Omega}$ and $\Lambda^u\coloneqq\Omega/\Fu_{\Omega}$. Then $\Lambda^s$ is homeomorphic to the real line if and only if $\Lambda^u$ is homeomorphic to the real line, and in this case there are two possibilities, represented in Figure \eqref{fig_Fried_surgery(2)}:

\begin{enumerate}[(i)]
\item Either $(\Omega;\Fs_\Omega,\Fu_\Omega)$ is equivalent to $\R^2$ endowed with the foliations by \emph{horizontal} and \emph{vertical} lines. In this case $(\phi,M)$ is orbitally equivalent to the suspension flow generated by a hyperbolic matrix $A\in\SL(2,\Z)$.

\item Or $(\Omega;\Fs_\Omega,\Fu_\Omega)$ is equivalent to the space obtained by restricting the \emph{horizontal} and \emph{vertical} foliations of $\R^2$ onto an open band delimited by two parallel lines, none of angle $\theta= 0$ or $\theta=\pi/2$.  
\end{enumerate}
\end{thm}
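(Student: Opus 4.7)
The plan is to first embed the bi-foliated plane into the product of its leaf spaces, then establish the dichotomy by propagating branching behavior between the two foliations, and finally classify the possible images. Consider the natural projection $\iota:\Omega\to\Lambda^s\times\Lambda^u$ sending $x$ to $(\Fs_\Omega(x),\Fu_\Omega(x))$. Transversality of $\Fs_\Omega$ and $\Fu_\Omega$ makes $\iota$ a local homeomorphism, and since two transverse leaves in the simply connected plane $\Omega$ can meet in at most one point, $\iota$ is injective. Hence $\iota$ realizes $\Omega$ as an open subset of $\Lambda^s\times\Lambda^u$, each factor being \emph{a priori} a simply connected (possibly non-Hausdorff) 1-manifold.

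For the equivalence $\Lambda^s\simeq\R \iff \Lambda^u\simeq\R$, I would argue by contraposition. Suppose $\Lambda^s$ contains two non-separated leaves $F_1,F_2$. Using cocompactness of the $\pi_1(M)$-action on $\Omega$ together with density of periodic orbits, one reduces to the case where $F_1$ and $F_2$ each contain a lift of a closed $\phi_t$-orbit. The two non-separated stable half-leaves bound a ``lozenge'' region whose other two sides are unstable half-leaves; analyzing the asymptotics of $\Fu_\Omega$ along this lozenge via the hyperbolic return maps at the two periodic orbits forces those unstable sides to be themselves non-separated in $\Lambda^u$. This combinatorial-dynamical step — propagating branching from one foliation to the other through lozenge configurations — is the delicate core of the proof and the step I expect to be the main obstacle.

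In the $\R$-covered case, $\iota$ embeds $\Omega$ as a simply connected open set $U\subset\R^2$ on which $\pi_1(M)$ acts freely and cocompactly by products of homeomorphisms of $\R$ preserving the horizontal and vertical foliations. Because each leaf of $\Fs_\Omega$ or $\Fu_\Omega$ maps injectively into the corresponding coordinate line, every horizontal or vertical slice of $U$ is connected. A direct classification of such subsets shows that $U$ is either all of $\R^2$ (case (i)) or an open band bounded by two parallel lines of angle $\theta\notin\{0,\pi/2\}$ (case (ii)); horizontal or vertical bounding lines are excluded because translating an adjacent leaf by a hyperbolic deck element would violate the one-sided nature of such a boundary. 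Finally, in case (i), the first coordinate projection descends $\pi_1(M)$-equivariantly to a fibration $M\to\R/\Z$ transverse to $\phi_t$ whose compact fibers — being global cross-sections of an Anosov flow on a 3-manifold — must be tori by an Euler characteristic argument, so $(\phi_t,M)$ is a suspension, which by Theorem \ref{thm_classific_sol-mflds} identifies it with some $(\phi_t^A,M_A)$ for a hyperbolic $A\in\SL(2,\Z)$.
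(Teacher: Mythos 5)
This is one of the statements the paper does not prove: it is quoted verbatim from Barbot and Fenley with citations, so there is no in-paper argument to compare against, and what you have written is a sketch of a substantial theorem from the literature. Your first step (embedding $\Omega$ into $\Lambda^s\times\Lambda^u$ via transversality and the at-most-one-intersection property of transverse leaves in the plane) and your classification of the saturated, slice-connected, $\pi_1(M)$-invariant open image $U$ are indeed the skeleton of the Barbot--Fenley argument. But two of your steps have genuine problems.

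First, the Hausdorff dichotomy. You correctly identify this as the delicate core, but the mechanism you propose does not work as stated: lozenges are \emph{not} a signature of branching. Skewed $\R$-covered Anosov flows (case (ii) of the very theorem you are proving) have every periodic orbit sitting at the corner of infinitely many lozenges, yet both $\Lambda^s$ and $\Lambda^u$ are Hausdorff. So ``non-separated stable leaves bound a lozenge whose unstable sides are non-separated'' cannot be the right implication; the presence of a lozenge forces nothing about separation, and the actual proof (Barbot's Th\'eor\`eme 2.5, Fenley's analysis of branching) runs along quite different lines. As written this step is a black box with an incorrect label on it.

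Second, the deduction of ``suspension'' in case (i) is wrong. The first-coordinate projection $\Omega\to\Lambda^s\cong\R$ is the quotient by the stable foliation, and the induced $\pi_1(M)$-action on $\Lambda^s$ is very far from being the deck action of a covering $\R\to\R/\Z$: every element represented by a periodic orbit fixes the point of $\Lambda^s$ corresponding to its stable leaf, so the action is neither free nor properly discontinuous and does not descend to a circle fibration of $M$. The standard route from global product structure to the suspension conclusion is instead via Plante/Barbot: one shows that product structure forces $\pi_1(M)$ to be solvable (or, equivalently, one produces a genuine cross-section by an asymptotic-cycle/invariant transverse measure argument), and only then invokes the classification of Anosov flows on sol-manifolds (Theorem~\ref{thm_classific_sol-mflds}). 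Your Euler characteristic remark about the fiber being a torus is fine once a cross-section exists, but the cross-section itself is the content, and your construction of it does not produce one.
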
  

When $\Lambda^s\simeq\Lambda^u\simeq\R$ the flow $(\phi,M)$ is said to be $\R$-\emph{covered}. In case (i) the bi-foliated plane is said to have \emph{global product structure}, while in case (ii) is said to be \emph{skewed $\R$-covered}, and it can be \emph{positively} or \emph{negatively twisted} according to the sign of $-\pi/2<\theta<\pi/2$. Case (i) is rigid, in the sense that there is only one family of Anosov flows admitting this bi-foliated plane structure, namely, the suspension Anosov flows. Case (ii) is more flexible, there are several families of Anosov flows having this kind of bi-foliated plane. For instance, \emph{geodesic flows on hyperbolic surfaces} and \emph{contact Anosov flow} are in this category (see e.g. \cite{Barbot_plane_aff_geom}). We will use the following result: 

\begin{prop}[See \cite{Fenley_Anosov-flows} and \cite{Bonatti-Asaoka-Marty_R-cov}]
\label{prop_R-cov_Fried_surgery}
Given a suspension Anosov flow, every Fried surgery of slope $m\neq 0$ along one periodic orbit produces a skewed $\R$-covered Anosov flow, positively or negatively twisted according to the sign of $m\in\Z$. 
\end{prop}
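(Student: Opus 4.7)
The plan is to track how Fried surgery modifies the bi-foliated plane of the suspension. By Theorem \ref{thm_R-covered}(i), the bi-foliated plane $(\Omega;\Fs_\Omega,\Fu_\Omega)$ of $(\phi_t^A,M_A)$ identifies with $\R^2$ foliated by horizontal and vertical lines. Fix a periodic orbit $\gamma$, and let $\mathcal{D}\subset\Omega$ be the $\pi_1(M_A)$-invariant discrete set of projections of the lifts of $\gamma$; at each $p\in\mathcal{D}$ one stable and one unstable leaf cross transversely.

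First I would describe the bi-foliated plane $(\Omega_\psi;\Fs_{\Omega_\psi},\Fu_{\Omega_\psi})$ of the surgered flow $(\psi_t,N)=\textsl{Surg}(\phi_t^A,M_A;(\gamma,m))$ as a local modification of $\Omega$ around each $p\in\mathcal{D}$. The blow-up $M^*$ lifts equivariantly to a blow-up $\widetilde{M^*}$ of $\widetilde{M}_A$ in which each lift of $\gamma$ is replaced by a solid cylinder; projecting onto the orbit space opens, around each $p$, a ``hole'' whose boundary is formed by arcs corresponding to the two Morse--Smale periodic orbits of $\phi_t^*$ on $\partial M^*$. The blow-down of slope $m$ re-glues this hole via the transverse foliation $\mathcal{C}$ whose leaves lie in the class $a+m\cdot b$. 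In the orbit space this re-gluing acts as a shear on the four local branches of $\Fs_\Omega,\Fu_\Omega$ that limit to $p$, by a controlled rotation $\theta(m)\in(-\pi/2,\pi/2)$ whose sign equals the sign of $m$.

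Next I would assemble these local pictures equivariantly under $\pi_1(M_A)$ to obtain $(\Omega_\psi;\Fs_{\Omega_\psi},\Fu_{\Omega_\psi})$. Since $(\psi_t,N)$ is a (topological) Anosov flow, Theorem \ref{thm_Fenley-Barbot} gives $\Omega_\psi\simeq\R^2$; and because only countably many controlled shears are performed, the induced leaf spaces $\Omega_\psi/\Fs_{\Omega_\psi}$ and $\Omega_\psi/\Fu_{\Omega_\psi}$ are obtained from $\Lambda^s\simeq\R$ and $\Lambda^u\simeq\R$ by quotients that neither collapse intervals nor introduce branching, hence remain homeomorphic to $\R$. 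Therefore $(\psi_t,N)$ is $\R$-covered.

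Finally, to rule out case (i) of Theorem \ref{thm_R-covered} and to identify the twist sign, I would exhibit a stable leaf and an unstable leaf in $\Omega_\psi$ that do not meet: the accumulation of shears of common sign along the $\pi_1$-orbit of a fixed point corresponding to $\delta$ bends entire stable leaves into a single half-plane cut off by some transverse unstable leaf, breaking the global product structure. The direction of bending, and hence the sign of the twist $\theta$, is prescribed by $\textrm{sgn}(m)$. The principal obstacle is the second step: showing rigorously that the local shears assemble into a coherent bi-foliated plane whose leaf spaces remain homeomorphic to $\R$. This should be handled by linearising $\phi_t^A$ on a tubular neighbourhood of $\gamma$ (possible since the holonomy along $\gamma$ is hyperbolic), where the surgery becomes an explicit affine re-gluing amenable to direct analysis of the stable and unstable leaves.
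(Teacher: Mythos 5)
First, a point of comparison: the paper does not prove Proposition~\ref{prop_R-cov_Fried_surgery} at all --- it is quoted from \cite{Fenley_Anosov-flows} (see also \cite{Bonatti-Asaoka-Marty_R-cov}), so your attempt has to stand on its own rather than be measured against an argument in the text.

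Your heuristic picture (shearing the bi-foliated plane at the lifts of $\gamma$) is the standard one, but as written the argument has a genuine gap in the step you yourself flag as the obstacle, and the proposed fix does not address it. Two concrete problems. (a) You assemble the local modifications ``equivariantly under $\pi_1(M_A)$,'' but after a nontrivial Dehn filling $\pi_1(N)$ is not $\pi_1(M_A)$ (for large $m$ it is a hyperbolic group, not solvable), so $\widetilde{N}$ is not an equivariant local modification of $\widetilde{M}_A$ and $\Omega_\psi$ is not obtained from $\Omega$ by puncturing and re-gluing at the $\pi_1(M_A)$-orbit $\mathcal{D}$. What the surgery preserves is $M_A\setminus\gamma\cong N\setminus\delta$, whose universal cover is neither $\widetilde{M}_A$ minus lifts nor $\widetilde{N}$ minus lifts; the configuration of lifts of $\delta$ in $\Omega_\psi$ is governed by the new deck group and cannot be read off from $\mathcal{D}$. (b) The assertion that the leaf spaces ``neither collapse intervals nor introduce branching, hence remain homeomorphic to $\R$'' is precisely the content of the proposition, not a consequence of the shears being countable and controlled. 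Non-Hausdorff branching of $\Lambda^s$ is a global phenomenon created by how distinct leaves accumulate on one another in $\Omega_\psi$; it is not detectable in a linearized tubular neighbourhood of $\gamma$. Indeed, Fried surgeries with slopes of mixed signs on several orbits of a suspension can and do produce non-$\R$-covered flows, so any correct proof must use the hypothesis (a single orbit, hence a single sign of slope) in an essential way --- your argument never does, and would ``prove'' $\R$-covered-ness in those false cases too. A workable route, and the one suggested by the paper's second citation, is to observe that the fiber torus of the suspension becomes a genus-one Birkhoff section of $(\psi_t,N)$ whose boundary multiplicities at $\delta$ all have the same sign (determined by $m$), and then invoke the criterion of \cite{Bonatti-Asaoka-Marty_R-cov} relating such sections to skewed $\R$-covered flows and their twisting sign; alternatively one reproduces Fenley's original argument, which works directly with the branching structure of the leaf spaces and uses the sign condition to exclude branching.
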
 

This results follows directly from Theorem A at \cite{Bonatti-Asaoka-Marty_R-cov}, since a Fried surgery supported in one orbit of a suspension Anosov flow will produce an Anosov flow endowed with a \emph{positive Birkhoff section}, as defined therein. 

There is an interesting phenomenon that happens for skewed $\R$-covered Anosov flows when the supporting 3-manifold is hyperbolic, and is the hearth of our construction:

\begin{thm}[Barthelm\'e-Fenley, \cite{Barthelme-Fenley_I}]
\label{thm_free_homot_classes}
If $\{\psi_t:N\to N\}_{t\in\R}$ is a $\R$-covered Anosov flow on a hyperbolic 3-manifold, then for every periodic orbit $\delta$ there exists an infinite set $\mathcal{FH}(\delta)$ of periodic orbits which are equivalent to $\delta$ as knots. That is, for every $\beta\in\mathcal{FH}(\gamma)$ there exists a 1-parameter family of homeomorphisms $H_t:M\to M$ such that $H_0=\textrm{id}_M$ and $H_1(\beta)=\delta$ preserving orientation of these curves.  
\end{thm}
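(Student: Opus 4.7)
The plan is to combine two ingredients: the \emph{cascade} structure of the skewed $\R$-covered bi-foliated plane (Theorem \ref{thm_R-covered}(ii)) produces infinitely many periodic orbits freely homotopic to $\delta$, and the hyperbolic geometry of $N$ upgrades free homotopy to ambient isotopy.

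First I would analyze the $\pi_1(N)$-action on the bi-foliated plane $(\Omega; \Fs_\Omega, \Fu_\Omega)$. A periodic orbit $\beta$ of $\psi_t$ lifts to an orbit in $\tilde{N}$ invariant under a non-trivial deck transformation $g_\beta$, and corresponds to a fixed point of $g_\beta$ acting on $\Omega$; its free homotopy class is the conjugacy class $[g_\beta]$. In the skewed setting the leaf spaces $\Lambda^s$ and $\Lambda^u$ are both copies of $\R$, and the band picture of $\Omega$ provides a canonical $\pi_1(N)$-equivariant \emph{cascade map} $\eta:\Lambda^s\to\Lambda^s$: given a stable leaf $L^s$, associate to it the ``next'' stable leaf in the cascade, obtained by moving one step along the chain of alternating stable/unstable leaves defined by the boundary of the band. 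If $g_\delta$ fixes the stable leaf $L_0^s$ through a lift of $\delta$, then by equivariance $g_\delta$ fixes every iterate $L_n^s=\eta^n(L_0^s)$, and similarly fixes the associated unstable leaves; the intersection of these fixed leaves yields infinitely many fixed points $p_n\in\Omega$ of $g_\delta$ (or of some fixed power, if orientation of the cascade forces it). Because $N$ is hyperbolic, the centralizer of $g_\delta$ in $\pi_1(N)$ is infinite cyclic, so the $\pi_1(N)$-orbits of the $p_n$ project to infinitely many \emph{distinct} periodic orbits $\beta_n$ in the conjugacy class $[g_\delta]$, hence freely homotopic to $\delta$.

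Second, I would upgrade free homotopy to ambient isotopy using hyperbolicity of $N$. Each $\beta_n$ is essential (periodic orbits of an Anosov flow are never null-homotopic), and represents the same conjugacy class as $\delta$. In a closed hyperbolic 3-manifold, which is irreducible, atoroidal and contains no essential annuli, one may invoke the Jaco-Shalen-Johannson annulus theorem: after a generic perturbation making $\beta_n$ and $\delta$ disjoint, the free homotopy gives an essential singular annulus, which upgrades to an embedded essential annulus, and atoroidality forces this annulus to come from an isotopy of $\beta_n$ onto $\delta$. The isotopy extension theorem then yields a one-parameter family $H_t: N\to N$ with $H_0=\mathrm{id}$ and $H_1(\beta_n)=\delta$, preserving orientations.

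The main obstacle I expect is the first step: defining the cascade $\eta$ precisely from the geometric description of skewed $\R$-covered bi-foliated planes, verifying its $\pi_1(N)$-equivariance, and ruling out accidental collapses of the iterates $\{p_n\}$ into a single $\pi_1(N)$-orbit. Hyperbolicity of $N$ is essential here to control the centralizer of $g_\delta$, since in a general 3-manifold a large centralizer could identify infinitely many $p_n$ and kill the construction. By contrast, once an infinite family of freely homotopic essential simple closed curves is produced, the annulus-theorem step in a hyperbolic 3-manifold is comparatively standard 3-manifold topology.
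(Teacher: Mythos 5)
First, note that the paper does not actually prove this statement: it is quoted as a theorem of Barthelm\'e--Fenley \cite{Barthelme-Fenley_I} and used as a black box, so there is no internal proof to compare against. Judged on its own terms, your first step is essentially the correct mechanism and matches the known argument: periodic orbits correspond to fixed points of deck transformations acting on the orbit space $\Omega$, and in the skewed case the band structure yields a $\pi_1(N)$-equivariant translation (the chain of lozenges) producing infinitely many fixed points of $g_\delta$ lying in distinct $\pi_1(N)$-orbits, with hyperbolicity of $N$ ruling out the degenerations that occur on Seifert or toroidal pieces. One detail you should not gloss over: consecutive corners of the chain of lozenges are freely homotopic alternately to $\delta$ and to $\delta$ with reversed orientation, so one must pass to every other corner to stay in the oriented class of $\delta$; your parenthetical hedge gestures at this but it needs to be made precise.

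The genuine gap is in your second step. In a closed hyperbolic 3-manifold, free homotopy of knots does not imply isotopy, and the Jaco--Shalen--Johannson annulus theorem cannot be invoked here: that theorem concerns essential maps of annuli with boundary on the (incompressible) boundary of a compact manifold, whereas $N$ is closed and contains no essential embedded annuli or tori at all. A singular annulus realizing the free homotopy between $\beta_n$ and $\delta$ cannot in general be promoted to an embedded one; if it could, every pair of freely homotopic essential knots in a hyperbolic 3-manifold would be isotopic, which is false. This upgrade is precisely the hard content of \cite{Barthelme-Fenley_I} (hence the title ``homotopy versus isotopy''), and it is carried out not by general 3-manifold topology but by using the flow: each lozenge in the chain projects to a Birkhoff-type annulus in $N$ whose boundary consists of the two corner orbits, and Barthelm\'e--Fenley show these annuli can be taken embedded and assembled into an ambient isotopy. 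Without some such flow-specific construction your argument does not close.
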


%%%%%%%%%%%%%%%%%%%%%%%%%%%%%%%%%%%%%%%%%%%%%%%%%%%%%%%%%%%%%%%%%%%%%%%%%%%%%%%%%%%%%%%%%%%%%%%%%%%%%%%%%%%%%%%%%%%%%%%%%%%%%%%%%%%%%%%%%%%%%%%%%%%
\subsection{Birkhoff sections}
Theorem \textbf{A} can be rephrased in terms of Birkhoff sections. Given an Anosov flow $\{\phi_t:M\to M\}_{t\in\R}$ a \emph{Birkhoff section} is an immersion of a compact surface $\Sigma$ inside $M$, denoted $\iota:(\Sigma,\partial\Sigma)\to(M,\Gamma)$, satisfying that:

\begin{enumerate}
\item On the boundary $\partial\Sigma$ the map $\iota$ is a covering onto a finite union of $\phi$-periodic orbits $\Gamma=\{\gamma_1,\dots,\gamma_k\}$;

\item On the interior $\mathring{\Sigma}\coloneqq\Sigma\setminus\partial\Sigma$ the map $\iota$ is an embedding and $\iota(\mathring{\Sigma})$ is transverse to the $\phi$-orbits;

\item There exists $T>0$ such that $\phi_{[0,T]}(x)\cap\iota(\Sigma)\neq\emptyset$, $\forall\ x\in M$.   
\end{enumerate}

Observe that, since $\mathring{\Sigma}$ is transverse to the $\phi$-orbits, there is always a canonical choice of orientation on $\Sigma$, defined by the convention that the orientation of $\Sigma$ paired with the coorientation determined by the flow is equivalent to the opposite orientation of $M$. For each $\gamma_i\in\Gamma$ we will set:

\begin{itemize}
\item $p_i=p(\gamma_i,\Sigma)\geq 1$ to be the \emph{number of connected components} of $\iota^{-1}(\gamma_i)$, 

\item $m_i=m(\gamma_i,\Sigma)\in\Z$ to be the degree of the oriented covering $\iota:C_{ij}\to\gamma_i$, where $C_{i1},\dots,C_{ip_i}$ are the components of $\iota^{-1}(\gamma_i)$ oriented as boundary of $\Sigma$, and $\gamma_i$ is oriented by the flow-direction. This integer is called the \emph{multiplicity} of the section $\Sigma$ at $\gamma_i$ and is the same for every component (see e.g. \cite{Sh_top-Anosov=Anosov}).  
\end{itemize}

Properties (1), (2), (3) imply that there exists a \emph{first return map} $P_\Sigma:\mathring{\Sigma}\to\mathring{\Sigma}$ on the interior of the section, and hence the restriction of $\phi$ to $M\setminus\Gamma$ is a suspension flow (in an open manifold). There is a construction called \emph{blow-down} which consists collapsing each component of $\partial\Sigma$ into a point. This produces a closed surface $\hat{\Sigma}$ together with a homeomorphism $\hat{P}_\Sigma:\hat{\Sigma}\to\hat{\Sigma}$ induced from $P_\Sigma$. For each $\gamma_i\in\Gamma$, the set of components $\iota^{-1}(\gamma_i)$ corresponds to a periodic orbit of $\hat{P}_\Sigma$ with period $p_i$. The feature of this construction is that $(\phi,M)$ can be obtained from the suspension flow $(\hat{\phi},\hat{M})$ generated by the blow-down homeomorphism (often a \emph{pseudo-Anosov flow}) by making Fried surgeries on the curves $\gamma_1,\dots,\gamma_k$. See \cite{Fri} for more details.

If $\Sigma$ has genus $1$, since $\{\phi_t:M\to M\}_{t\in\R}$ is Anosov and $\mathring{\Sigma}$ is transverse to the $\phi$-orbits, then the center-stable and unstable foliations of the flow print on $\mathring{\Sigma}$ a pair of transverse foliations $\Fs_\Sigma$, $\Fu_\Sigma$, that extend onto foliations on $\hat{\Sigma}$ without singularities (since $\chi(\hat{\Sigma})=0$), that are respectively contracted and expanded by the action of $\hat{P}_\Sigma$. Thus, the homeomorphism $\hat{P}:\hat{\Sigma}\to\hat{\Sigma}$ is conjugated to a linear Anosov diffeomorphism on the torus given by a matrix $A\in\SL(2,\Z)$ (cf. Lemma 1 at \cite{ghys_godbillon-vey}). The Birkhoff section actually exhibits a path of Fried surgeries connecting $(\phi,M)$ with the suspension flow $(\phi^A,M_A)$:
$$[(\phi,M)]\xrightarrow[]{(\gamma_1,-m_1),\dots,(\gamma_k,-m_k)}[(\phi^A,M_A)].$$
Observe that these surgeries are necessarily integral, and the multiplicity $m_i=m(\gamma_i,\Sigma)\in\Z$ of the Birkhoff section at each $\gamma_i$ has opposite sign and same modulus as the surgery slope on $\gamma_i$.

\begin{defn}
\label{defn_per_Z}
Given a periodic orbit $\alpha$ of the suspension flow $(\phi^A,M_A)$ define $\per_\Z(\alpha)$ to equal the pairing $[\alpha]\cdot[\T^2]$ of the homology class $[\alpha]\in H_1(M_A)$ with a generator $[\T^2]$ of $H^1(M_A;\Z)$.
\end{defn}

\begin{thmB}
Let $A\in\SL(2,\Z)$ be a matrix with $\tr(A)\geq 3$ satisfying that $A$ is conjugated to $A^{-1}$ in $\GL(2,\Z)$. Then, given any periodic orbit $\gamma$ of the suspension flow $(\phi^A,M_A)$ there exists $m_0=m_0(\gamma)>0$ such that, for every integer $m$ with $|m|\geq m_0$, there are infinitely many Birkhoff sections $\iota:(\Sigma,\partial\Sigma)\to(M_A,\Gamma)$ verifying that:
\begin{enumerate}
\item $\Gamma$ consists of two periodic orbits $\gamma$ and $\alpha$, 
\item The multiplicities and number of components on the boundary are:
	\begin{itemize}
	\item[$\bullet$] $m(\gamma,\Sigma)=-m$, $m(\alpha,\Sigma)=m$,
	\item[$\bullet$] $p(\gamma,\Sigma)=\per_\Z(\alpha)$, $p(\alpha,\Sigma)=\per_\Z(\gamma)$,
	\end{itemize}	   
\item $gen(\Sigma)=1$ and the first return map $P_\Sigma$ is conjugated to $A$.
\end{enumerate}
\end{thmB}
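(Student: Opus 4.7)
The strategy combines Fried surgery with the Barthelme-Fenley theorem on infinite free homotopy classes in $\R$-covered Anosov flows on hyperbolic 3-manifolds. Given $\gamma$, the four steps are: (i) perform a Fried surgery of slope $m$ on $\gamma$ to land in a hyperbolic, skewed $\R$-covered Anosov flow $(\psi_t,N)$; (ii) apply Barthelme-Fenley (Theorem \ref{thm_free_homot_classes}) to produce infinitely many $\psi_t$-periodic orbits ambient-isotopic to the new core orbit $\delta=\mathcal{P}(\gamma)$; (iii) for each such orbit, invert the first surgery via a Fried surgery of slope $-m$ to return to an Anosov flow on the sol-manifold $M_A$, which by Plante's classification together with the hypothesis $A\simeq A^{-1}$ is orbital-equivalent to $(\phi_t^A,M_A)$; (iv) extract the Birkhoff section from the torus fiber of the target suspension.

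Concretely, let $m_0=m_0(\gamma)$ be as in Proposition \ref{prop_hyperbolic_Fried_surgery}, so that for every integer $|m|\geq m_0$ the Fried surgery $(\phi_t^A,M_A)\xrightarrow{(\gamma,m)}(\psi_t,N)$ produces an Anosov flow on a hyperbolic 3-manifold, which is skewed $\R$-covered by Proposition \ref{prop_R-cov_Fried_surgery}. Theorem \ref{thm_free_homot_classes} provides an infinite family $\{\beta_k\}\subset \mathcal{FH}(\delta)$; set $\alpha_k=\mathcal{P}^{-1}(\beta_k)$, giving infinitely many distinct periodic orbits of $(\phi_t^A,M_A)$ different from $\gamma$. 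For each $k$, the Fried surgery $(\psi_t,N)\xrightarrow{(\beta_k,-m)}(\psi'_t,N'_k)$ is defined, and the ambient isotopy between $\beta_k$ and $\delta$ in $N$ identifies $N'_k$ topologically with the manifold obtained by inverting the first surgery along $\delta$, namely $M_A$. Thus we obtain an Anosov flow on the sol-manifold $M_A$; Theorem \ref{thm_classific_sol-mflds} and Proposition \ref{prop_rigidity} (exploiting the uniqueness of the torus fibration, Theorem \ref{prop_Uniqueness_of_torus_fibration}) reduce it to a suspension $\phi_t^B$ with $B$ conjugate in $\GL(2,\Z)$ to $A$ or $A^{-1}$, and the assumption $A\simeq A^{-1}$ collapses both options to $(\phi_t^A,M_A)$. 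This yields
$$(\phi_t^A,M_A)\xrightarrow{(\gamma,m),(\alpha_k,-m)}(\phi_t^A,M_A)$$
for infinitely many distinct $\alpha_k$.

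The Birkhoff section $\Sigma_k$ is constructed by pulling back, via the Fried identification of complements, a torus fiber $T$ of the target suspension after removing small tubular neighborhoods of its two exceptional orbits $\gamma',\alpha'$. This gives a compact genus-$1$ surface in the source $M_A$ with boundary on $\gamma\cup\alpha_k$, transverse to $\phi_t^A$ on its interior, whose first-return map is conjugate to the first-return of $\phi_t^A$ to $T$, namely $A$. The multiplicities $m(\gamma,\Sigma_k)=-m$ and $m(\alpha_k,\Sigma_k)=m$ are forced by the standard reciprocity between Birkhoff multiplicity and Fried surgery slope. Each boundary component of $\Sigma_k$ on $\gamma$ bijects with an intersection point of $T$ with $\gamma'$, so $p(\gamma,\Sigma_k)=\per_\Z(\gamma')$; the self-orbital equivalence of Theorem \textbf{A} yields $\gamma'$ equal to $\alpha_k$ up to such an equivalence (and symmetrically $\alpha'$ equal to $\gamma$), and since self-orbital equivalences isotopic to the identity preserve $\per_\Z$, this delivers $p(\gamma,\Sigma_k)=\per_\Z(\alpha_k)$ and $p(\alpha_k,\Sigma_k)=\per_\Z(\gamma)$, as claimed.

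The main obstacle is step (iii): guaranteeing that the inverse surgery along $\beta_k$ (not $\delta$) lands in an Anosov flow orbital-equivalent to $(\phi_t^A,M_A)$, rather than merely on a manifold homeomorphic to $M_A$. Meeting this requires chaining carefully the knot-equivalence of $\beta_k$ with $\delta$ (from Barthelme-Fenley), Plante's classification, uniqueness of the torus fibration, and the hypothesis $A\simeq A^{-1}$, while controlling orientations throughout. The component-count identities likewise depend on the symmetric version of Theorem \textbf{A}'s self-orbital equivalence statement ($\gamma'$ equivalent to $\alpha_k$, mirroring the stated $\alpha'$ equivalent to $\gamma$), which demands a brief separate verification.
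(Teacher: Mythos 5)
Your proposal follows essentially the same route as the paper's proof: surgery $(\gamma,m)$ into a hyperbolic skewed $\R$-covered flow, Barthelme--Fenley's infinite free homotopy class of $\delta=\mathcal{P}(\gamma)$, the inverse surgery $(\beta_k,-m)$ landing on a sol-manifold, Plante's classification plus uniqueness of the torus fibration plus $A\simeq A^{-1}$ to close the loop, and transport of the torus fiber to produce the Birkhoff section. The only real divergence is in deriving $p(\gamma,\Sigma)=\per_\Z(\alpha)$: you invoke a symmetric version of the self-orbital-equivalence statement (identifying the image of $\gamma$ with $\alpha_k$ up to equivalence), which you rightly flag as requiring separate verification, whereas the paper sidesteps this entirely by observing that $[\iota(\partial\Sigma)]=0$ in $H_1(M_A)$ forces $p(\gamma,\Sigma)\cdot\per_\Z(\gamma)\cdot(-m)+p(\alpha,\Sigma)\cdot\per_\Z(\alpha)\cdot m=0$, which together with the directly computed $p(\alpha,\Sigma)=\per_\Z(\gamma)$ gives the count immediately; adopting that homological shortcut would remove the one loose end in your plan.
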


%%%%%%%%%%%%%%%%%%%%%%%%%%%%%%%%%%%%%%%%%%%%%%%%%%%%%%%%%%%%%%%%%%%%%%%%%%%%%%%%%%%%%%%%%%%%%%%%%%%%%%%%%%%%%%%%%%%%%%%%%%%%%%%%%%%%%%%%%%%%%%%%%%%
\subsection{Construction of the paths}
Let's prove Theorem \textbf{A}-\textbf{A'} and Proposition \textbf{B}.
For this, fix a matrix $A\in\SL(2,\Z)$ with $\tr(A)\geq 3$, choose some periodic orbit $\gamma$ of the suspension flow $\{\phi_t^A:M_A\to M_A\}_{t\in\R}$, let $m>0$ be an integer and define $(\psi,N)$ to be the flow obtained from the suspension $(\phi^A,M_A)$ by doing a Fried surgery along $\gamma$ with slope equal to $m$. We denote by $\Pi:\textsl{Per}(\phi^A,M_A)\to\textsl{Per}(\psi,N)$ the natural bijection between periodic orbits induced by this surgery, and denote $\delta=\Pi(\gamma)$.

\begin{proof}[Proof of Theorem \textbf{A}-\textbf{A'}]
We have that:
\begin{itemize}
\item By Proposition \ref{prop_R-cov_Fried_surgery}, for every $m>0$ then $(\psi,N)$ is a skewed $\R$-covered Anosov flow, positively twisted;

\item By Proposition \ref{prop_hyperbolic_Fried_surgery}, exists $m_0=m_0(\gamma)$ such that $N$ is hyperbolic if $m\geq m_0$;

\end{itemize}
Setting $m\geq m_0$ we get a skewed $\R$-covered Anosov flow $\{\psi_t:N\to N\}_{t\in\R}$ on a hyperbolic 3-manifold. By Theorem \ref{thm_free_homot_classes}, there exists an infinite set $\mathcal{FH}(\delta)$ of $\psi$-periodic orbits, each of them orientation preserving isotopic to $\delta$. 

\begin{figure}[t]
\centering
\includegraphics[width=\textwidth]{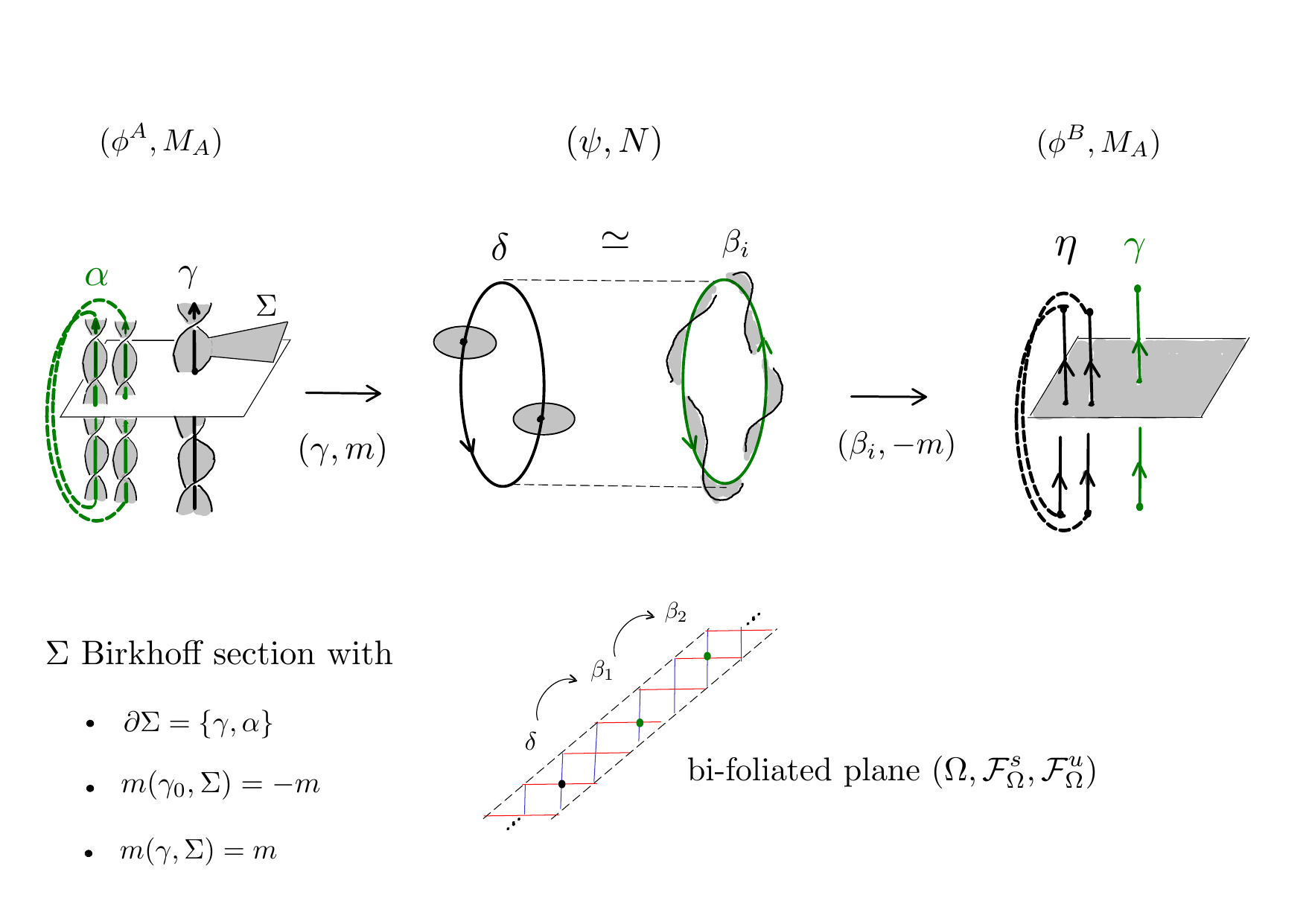}
\caption{Surgery $(\phi^A,M_A)\xrightarrow[]{(\gamma,m),(\alpha,-m)}(\phi^{B},M_A)$, $B\simeq A$ or $B\simeq A^{-1}$.}
\label{fig_infty-many_closed_paths}
\end{figure}

Consider one orbit $\beta\in\mathcal{FH}(\delta)$ different from $\delta$. Since $\delta$ and $\beta$ are equivalent as oriented knots, in particular there exists a homeomorphism on $N$ taking $\delta\mapsto\beta$ and preserving their orientations, so the surgery on $(\psi,N)$ with parameters $(\beta,-m)$ will produce an Anosov flow on a manifold homeomorphic to $M_A$. Recall that $M_A$ is a sol-manifold, therefore from the classification stated in Theorem \ref{thm_classific_sol-mflds} and Proposition \ref{prop_rigidity} this flow must be orbitally equivalent to the suspension of either $A$ or $A^{-1}$. That is:
$$[(\phi^A,M_A)]\xmapsto[]{(\gamma,m)}[(\psi,N)]\xmapsto[]{(\beta,-m)}[(\phi^B,M_A)],\ \text{where}\ B\simeq A\ \text{or}\ B\simeq A^{-1}.$$

If we assume in addition that $A$ is conjugated to $A^{-1}$ in $GL(2,\Z)$ then the surgery described above is a closed loop in $\mathbf{G}_\circ$ with base point at the class of $(\phi^A,M_A)$. Define 
\begin{itemize}
\item $\alpha$ to be the preimage of $\beta$ under the surgery $(\phi^A,M_A)\xmapsto[]{(\gamma,m)}(\psi,N)$,
\item $\eta$ to be the image of $\delta$ under the surgery $(\psi,N)\xmapsto[]{(\beta,-m)}(\phi^{A},M_A)$.
\end{itemize}
Let $\gamma'$ be the image of $\beta$ under the surgery $(\psi,N)\xmapsto[]{(\beta,-m)}(\phi^{A},M_A)$. Along the construction there is a sequence of homeomorphisms 
$M_A\setminus\gamma\to N\setminus\delta\to N\setminus\beta\to M_A\setminus \gamma'$  
so by Proposition \ref{prop_rigidity2} we have that, up to a self-orbital equivalence of ($\phi^A,M_A)$, the orbit $\gamma'$ is equal to $\gamma$. We have codified paths:
\begin{align}
\label{eq_path1}
& [(\phi^{A},M_A)]\xmapsto[]{(\gamma,m),(\eta,-m)}[(\phi^A,M_A)]\ \text{taking}\ \Pi:\eta\mapsto\gamma\ \text{and}\ \Pi:\gamma\mapsto\alpha\\
\label{eq_path2}
& [(\phi^{A},M_A)]\xmapsto[]{(\gamma,m),(\alpha,-m)}[(\phi^A,M_A)]\ \text{taking}\ \Pi:\alpha\mapsto\gamma\ \text{and}\ \Pi:\gamma\mapsto\eta.
\end{align}

The global transverse section of $\{\phi_t^{A}:M_A\to M_A\}_{t\in\R}$ is transformed by the surgery \eqref{eq_path1} into the desired Birkhoff section $\iota:(\Sigma,\partial\Sigma)\to(M_A,\Gamma)$ for $(\phi^A,M_A)$, where $\Gamma=\{\gamma,\alpha\}$. In fact: 
\begin{itemize}
\item It is direct that $m(\gamma,\Sigma)=-m$ and $m(\alpha,\Sigma)=m$;

\item It is direct that $\Sigma$ has $p(\alpha,\Sigma)=\per_\Z(\gamma)$ boundary components along $\alpha$;

\item To determine $p(\gamma,\Sigma)$, since $[\iota(\partial\Sigma)]=0\in H_1(M_A)\simeq\Z$ we have the relation
$$0=p(\gamma,\Sigma)\cdot\per_\Z(\gamma)\cdot m(\gamma,\Sigma)+p(\alpha,\Sigma)\cdot\per_\Z(\alpha)\cdot m(\alpha,\Sigma),$$
from where we conclude that $p(\gamma,\Sigma)=\per_\Z(\alpha)$.   
\end{itemize} 

Finally, the set $\mathcal{Q}(m,\gamma)$ consists in all periodic orbits $\alpha$ of $(\phi^A,M_A)$ such that $\beta=\Pi(\alpha)$ belongs to $\mathcal{FH}(\Pi(\gamma))$. \qedhere
\end{proof}

\begin{proof}[Proof of Proposition \textbf{B}]
Proposition \textbf{B} follows from an estimation given in \cite{Barthelme-Fenley_II}, on the total number of periodic orbits $\beta$ in $\mathcal{FH}(\delta)$ with period $\per(\beta)\leq t$, for every $t\geq 0$.

\begin{thm}[Theorem G of \cite{Barthelme-Fenley_II}]
If $\{\psi_t:N\to N\}_{t\in\R}$ is a $\R$-covered Anosov flow in a hyperbolic manifold then there exist constants $C_0,t_0>0$ such that, for every $\psi$-periodic orbit $\delta$, it is verified that 
\begin{equation}
\label{eq_1}
|\{\beta\in\mathcal{FH}(\delta):\per(\beta)\leq t\}|\leq C_0\sqrt{t}\log(t),\ \forall\ t>t_0.
\end{equation}
\end{thm}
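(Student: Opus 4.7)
The approach I would take is to combine the combinatorial description of free homotopy classes in skewed $\R$-covered flows with the rigidity of the hyperbolic metric on $N$. Since $N$ is hyperbolic, Theorem \ref{thm_R-covered} forces $(\psi_t, N)$ into case (ii), so the bi-foliated plane $(\Omega;\Fs_\Omega,\Fu_\Omega)$ is skewed $\R$-covered. In this situation, the structure theory of Fenley provides a canonical bi-infinite enumeration $\mathcal{FH}(\delta) = \{\beta_n\}_{n\in\Z}$ with $\beta_0 = \delta$, arising from the chain of lozenges sharing a common corner at the lift of $\delta$ in $\Omega$. Each $\beta_n$ determines a conjugacy class $[g_n]\subset\pi_1(N)$, and from the lozenge combinatorics the elements $g_n$ admit an explicit description in terms of $g_0$ and a canonical "twist" element $h\in\pi_1(N)$ attached to the class.

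The second step is to pass from the Anosov period to the hyperbolic translation length. Any two Riemannian metrics on the closed manifold $N$ are bi-Lipschitz, so $\per(\beta_n)$ is comparable, up to multiplicative constants depending only on $(\psi_t,N)$, to the length of the shortest loop in the free homotopy class of $\beta_n$. Since all non-trivial elements of $\pi_1(N)\subset\mathrm{Isom}(\mathbb{H}^3)$ are loxodromic (as $N$ is closed hyperbolic), the latter equals the hyperbolic translation length $\ell(g_n)=\inf_{x\in\mathbb{H}^3}d(x,g_nx)$. The counting problem thus reduces to bounding $|\{n\in\Z:\ell(g_n)\leq t\}|$ for $t$ large.

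The heart of the argument is a lower bound of the form $\ell(g_n)\gtrsim n^2/\log(|n|+2)$, which immediately yields $|\{n:\ell(g_n)\leq t\}|\leq C_0\sqrt{t}\,\log(t)$. The strategy for this bound is to exploit the twist-iteration pattern: heuristically, $g_n$ behaves like a conjugate of $g_0$ by the $n$-th power of a helical isometry canonically associated to $h$, so that its axis in $\mathbb{H}^3$ traces out a two-parameter family of positions indexed by $n$. Combining the Margulis lemma with the discreteness of $\pi_1(N)$, one can bound the number of axes that can meet a single Margulis-thick compact region with translation length $\leq t$, producing the required quadratic-with-logarithmic-correction lower bound.

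The main obstacle is precisely this lower bound on $\ell(g_n)$. Translating the discrete lozenge-chain combinatorics into a sharp hyperbolic-geometric estimate is subtle, and both the exponent $2$ and the logarithmic correction must be carefully justified. One possible route is to encode $\{g_n\}$ as vertices of an appropriate combinatorial object (a quasi-tree or discrete helix) embedded in $\mathbb{H}^3$ and apply distortion estimates for the inclusion $\pi_1(N)\hookrightarrow\mathrm{Isom}(\mathbb{H}^3)$; an alternative is to compute translation lengths directly via trace identities in $\mathrm{PSL}(2,\mathbb{C})$ applied to the explicit twist structure. Either route requires a genuine interaction between the combinatorics of the Anosov flow and the rigid hyperbolic metric on $N$, and is considerably deeper than the essentially elementary arguments used elsewhere in the paper.
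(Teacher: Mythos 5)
The paper does not prove this statement: it is imported verbatim as Theorem G of \cite{Barthelme-Fenley_II} and used as a black box, so there is no internal proof to compare against. Judged on its own terms, your sketch correctly identifies the standard reduction: hyperbolicity of $N$ forces the skewed $\R$-covered case of Theorem \ref{thm_R-covered}, the class $\mathcal{FH}(\delta)$ is a bi-infinite chain $\{\beta_n\}_{n\in\Z}$ arising from a string of lozenges, and since any loop in a free homotopy class has Riemannian length at least the translation length of the corresponding loxodromic element, it suffices to bound $|\{n:\ell(g_n)\leq t\}|$. Note, though, that only the inequality $\per(\beta_n)\geq c\,\ell(g_n)$ is available and needed; the reverse comparison you assert (period bounded above by a multiple of the translation length) is not generally true, since the closed orbit need not be close to the geodesic representative, and fortunately it is not required for an upper count.

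The genuine gap is that the entire content of the theorem is the quantitative lower bound $\ell(g_n)\gtrsim n^2/\log(|n|+2)$, and your proposal does not prove it: you explicitly flag it as the main obstacle and offer only heuristics (Margulis lemma, helical twist, trace identities) without carrying any of them out. The exponent $2$ and the logarithmic correction are precisely what the Barthelm\'e--Fenley argument establishes, and nothing in your sketch pins them down; without that estimate the conclusion $C_0\sqrt{t}\log(t)$ is unsupported. A second point you should not gloss over is uniformity: the statement requires $C_0$ and $t_0$ to work for \emph{every} periodic orbit $\delta$ simultaneously, so the implicit constants in any lower bound on $\ell(g_n)$ must be independent of the base orbit, whereas your sketch works with a fixed $\delta$ throughout. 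Since the paper under review uses this result only as a citation, the honest options are either to cite it as the paper does or to actually supply the missing estimate.
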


To prove Proposition \textbf{B} it is enough to show that there exist $C_1,t_1>0$ such that
\begin{equation}
\label{eq_2}
|\{\alpha\in\mathcal{Q}(\gamma,m):\per(\alpha)\leq t\}|\leq C_1\sqrt{t}\log(t),\ \forall\ t>t_1. 
\end{equation}
Because, in this case, it follows that 
$$\lim_{t\to\infty}\frac{\left\vert\left\{\alpha\in\mathcal{Q}(\gamma,m):\per(\alpha)<t\right\}\right\vert}{\left\vert\left\{\alpha\in\PP:\per(\alpha)<t\right\}\right\vert}\leq \lim_{t\to\infty}\frac{C_1\sqrt{t}\log(t)}{\kappa\cdot te^{\kappa t}}=0
$$ 
Here we use the fact that, for some constant $\kappa>0$, the number of periodic orbits with period less than $t$ is equivalent to $\kappa\cdot te^{\kappa t}$ as $t\to+\infty$, as provided by the Bowen-Margulis theorem (see, e.g. \cite{Katok-Hasselblatt}).

To derive \eqref{eq_2} from \eqref{eq_1} we have to compare the periods of a $\phi^A$-periodic orbit $\alpha$ and the corresponding $\psi$-periodic orbit $\beta=\Pi(\alpha)$. To do this, let $T$ be a global transverse section of $(\phi^A,M_A)$. The surgery 
$$(\phi^A,M_A) \xrightarrow[]{(\gamma,m)}(\psi,N)$$
transforms the global transverse section $T$ into a Birkhoff section for $(\psi,N)$ that we denote by $\iota:(S,\partial S)\to(N,\{\delta\})$. It has $p(S,\delta)=\per_\Z(\gamma)$ boundary components (cf. Definition \ref{defn_per_Z}) attached along $\delta$ with multiplicity $m$. It is direct to check that every $\phi^A$-periodic orbit $\alpha\neq\gamma$ is sent to a $\psi$-periodic orbit $\beta$ transverse to the Birkhoff section $S$ satisfying $[\beta]\cdot[S]=\per_\Z(\alpha)$, where this last expression indicates pairing of relative homology classes in $H_*(N;\delta)$. Furthermore, since the manifolds supporting the flows $\phi^A$ and $\psi$ are compact, there exists constants $0<a_1<a_2$ and $0<b_1<b_2$ such that 
\begin{align*}
& a_1\cdot\per(\alpha)\leq\length(\alpha)\leq a_2\cdot\per(\alpha),\ \text{for every}\ \alpha\in\PP\\
& b_1\cdot\per(\beta)\leq\length(\beta)\leq b_2\cdot\per(\beta),\ \text{for every}\ \beta\in\textrm{Per}(\psi,N).  
\end{align*}

We have that:
\begin{itemize}
\item Since the $\phi^A$-orbits are transverse to $T$ there exists $l_1>0$ such that every $\phi^A$-orbit segment of length at most $l_1$ intersects at most once the transverse section $T$. Hence, every periodic orbit $\alpha$ with length bigger than $l_1$ has a number of intersections with $T$ bounded from above by $(a_2/l_1)\cdot\per(\alpha)$. It follows that: 
$$\per(\alpha)\geq\kappa_1\cdot\per_\Z(\alpha),\ \text{for some constant}\ \kappa_1>0.$$ 

\item There exists $l_2>0$ such that every $\psi$-orbit segment of length bigger that $\tau_2$ has at least one intersection with the Birkhoff section $S$. Hence, every $\psi$-periodic orbit $\beta$ with length bigger that $l_2$ and transverse to the interior of $S$ must intersect this surface in at least $(b_1/l_2)\cdot\per(\beta)$ points. It follows that: 
$$[\beta]\cdot[S]\geq\kappa_2\cdot\per(\beta),\ \text{for some constant}\ \kappa_2>0.$$ 
\end{itemize}
In addition, since every Anosov flow has at most finitely many periodic orbits of length (or period) lower than any given constant, there exists $t_*>0$ such that $\length(\alpha)>l_1$ and $\length(\beta)>l_2$ whenever $\per(\alpha)>t_*$. Hence, from the items above we see that there exists $\kappa_3>0$ such that $\per(\alpha)\geq\kappa_3\cdot\per(\beta)$, whenever $\per(\alpha)>t_*$. This implies 
$$\{\alpha\in\mathcal{Q}(\gamma,m):t_*<\per(\alpha)\leq t\}\subset\{\Pi^{-1}(\beta):\beta\in\mathcal{FH}(\delta)\text{ and }\per(\beta)\leq t/\kappa_3\},\ \forall\ t>t_*.$$
Finally, by setting $C_*=|\{\alpha\in\mathcal{Q}(\gamma,m):\per(\alpha)\leq t_*\}|$, there exists constants $C_1>C_0\sqrt{1/\kappa_3}$ and $t_1>\max\{t_0,t_*\}$ such that 
$$|\{\alpha\in\mathcal{Q}(\gamma,m):\per(\alpha)\leq t\}|\leq C_*+C_0\sqrt{t/\kappa_3}\log(t/\kappa_3)\leq C_1\sqrt{t}\log(t),\ \forall\ t>t_1.\qedhere$$
\end{proof}

\section*{Acknowledgements}
We want to thank S. Fenley for useful comments and discussions on the present result.

%%%%%%%%%%%%%%%%%%%%%%%%%%%%%%%%%%%%%%%%%%%%%%%%%%%%%%%%%%%%%%%%%%%%%%%%%%%%%%%%%%%%%%%%%%%%%%%%%%%%%%%%%%%%%%%%%%%%%%%%%%%%%%%%%
\bibliographystyle{plain}

%%%%%%%%%%%%%%%%%%%%%%%%%%%%%%%%%%%%%%%%%%%%%%%%%%%%%%%%%%%%%%%%%%%%%%%%%%%%%%%%%%%%%%%%%%%%%%%%%%%%%%%%%%%%%%%%%%%%%%%%%%%%%%%%%
\end{document}